\numberwithin{equation}{section}
\newcommand\rr{{\mathfrak r}}
\newcommand\ii{{\mathfrak i}}
\renewcommand\Lie{{\mathscr L}}
\newcommand\lder{{\mathbb L}}
\begin{document}

\title{Group and Lie algebra filtrations and homotopy groups of spheres}
\author{Laurent Bartholdi}
\address{Mathematisches Institut, Georg-August Universit\"at zu G\"ottingen}\email{laurent.bartholdi@gmail.com}
\thanks{The first author is supported by the ``@raction'' grant ANR-14-ACHN-0018-01.}

\author{Roman Mikhailov}
\address{Laboratory of Modern Algebra and Applications, St. Petersburg State University, 14th Line, 29b,
Saint Petersburg, 199178 Russia and St. Petersburg Department of
Steklov Mathematical Institute} \email{rmikhailov@mail.ru}
\thanks{The second author is supported by the grant of the Government of the Russian Federation for the state support of scientific research carried out under the supervision of leading scientists, agreement 14.W03.31.0030 dated 15.02.2018.}

\date{January 9, 2021}

\dedicatory{To the memory of John R. Stallings, 1935--2008}

\begin{abstract}
  We establish a bridge between homotopy groups of spheres and commutator calculus in groups, and solve in this manner the ``dimension problem'' by providing a converse to Sjogren's theorem: every abelian group of bounded exponent can be embedded in the dimension quotient of a group.

  This is proven by embedding for arbitrary $s,d$ the torsion of the homotopy group $\pi_s(S^d)$ into a dimension quotient, via a result of Wu. In particular, this invalidates some long-standing results in the literature, since for every prime $p$, there is some $p$-torsion in $\pi_{2p}(S^2)$ by a result of Serre. We explain in this manner Rips's famous counterexample to the dimension conjecture in terms of the homotopy group $\pi_4(S^2)=\mathbb Z/2\mathbb Z$.

  We finally obtain analogous results in the context of Lie rings: for every prime $p$ there exists a Lie ring with $p$-torsion in some dimension quotient.
\end{abstract}
\maketitle

\section{Introduction}
The fundamental problem of combinatorial group theory can be phrased as:
``\emph{Given a group $G$ presented as a quotient of a free group,
  what can be said of quotients of $G$ itself?}''. Of fundamental
importance are those quotients produced by universal constructions;
prominently the maximal nilpotent quotients $G/\gamma_n(G)$, and more
generally $G/N$ for $N\triangleleft G$ obtained from $G$ using the
elementary operations of product, intersection, commutation and power.

An altogether different family of quotients arise from associative
algebra.  Every group $G$ naturally embeds in its \emph{group ring}
$\Z G$, leading to images of $G$ in the quotients $\Z G/\varpi^n$ by
powers of the augmentation ideal; and more generally
$G/G\cap(1+\mathfrak N)$ for ideals $\mathfrak N\triangleleft\Z G$
obtained from $\varpi$ using the elementary operations of product,
intersection, sum, and scalar multiple. These quotients play a crucial
role as the receptacle for numerous topological invariants, such as
Milnor's link invariants~\cites{Fox:fdc1,Fox:fdc4,MR1470727}.

A key insight of Magnus~\cite{Magnus:1935} was that the filtrations
$\gamma_n(G)$ of $G$ and $\varpi^n$ of $\Z G$ are deeply related:
defining $\delta_n(G)\coloneqq G\cap(1+\varpi^n)=\ker(G\to\Z G/\varpi^n)$ the
$n$th \emph{dimension subgroup}, one has $\gamma_n(G)\le\delta_n(G)$,
and the \emph{dimension problem} asks to understand when
$\delta_n(G)=\gamma_n(G)$. In that same paper, Magnus showed that if
$F$ is a free group then $\delta_n(F)=\gamma_n(F)$ for all $n$.

It was claimed on numerous occasions~\cites{Cohn:1952,Losey:1960,MKS}
that $\delta_n(G)=\gamma_n(G)$ holds for all $n$ and all groups
$G$. It is relatively easy to prove $\delta_n(G)=\gamma_n(G)$ for
$n\le 3$, but a counterexample was found by Rips~\cite{Rips:1972},
with $\delta_4(G)/\gamma_4(G)=\Z/2\Z$. Nevertheless, the quotient
$\delta_n(G)/\gamma_n(G)$ is always abelian, and Sjogren bounded its
exponent by a function of $n$ only~\cite{Sjogren:1979}; the
following claim even stood in the literature:
\newtheorem*{mainclaim}{``Theorem''}
\begin{mainclaim}[Gupta~\citelist{\cite{Gupta:1996}*{\S4}\cite{Gupta:2002}*{Theorem~2.2}}]
  For all $n$ and all groups $G$ one has
  $\delta_n(G)^2\le\gamma_n(G)\le \delta_n(G)$.
\end{mainclaim}

\newtheorem{mainthm}{Theorem}\renewcommand\themainthm{\Alph{mainthm}}
\noindent Our main result is that this cannot hold, and Sjogren's result alluded to above is essentially optimal:
\begin{mainthm}[See Theorem~\ref{thm:groupmain}]\label{thm:main}
  For every abelian group $H$ of bounded exponent, there exists a
  group $G$ one of whose quotients $\delta_n(G)/\gamma_n(G)$ contains
  $H$ as a subgroup. In particular $p$-torsion may appear in
  $\delta_n(G)/\gamma_n(G)$ for all primes $p$.
\end{mainthm}
By Pr\"ufer's theorem, we reduce to $H$ cyclic. It is known that every
finite cyclic group appears as a subgroup of $\pi_s(S^d)$ for some
$s,d$. For these parameters, we construct a group $G$, integer $n$ and
monomorphism
$\textsf{torsion}(\pi_s(S^d))\hookrightarrow\delta_n(G)/\gamma_n(G)$. In
fact, it suffices to embed $\pi_s(S^2\vee S^2)$, since by Hilton's
theorem this group contains $\pi_s(S^d)$ for all $d$. We even
construct explicitly, for $p=2$ and $p=3$, an element of order $p$ in
$\delta_n(G)/\gamma_n(G)$, based on the element of order $p$ in
$\pi_{2p}(S^2)$ discovered by Serre~\cite{Serre}.

Theorem~\ref{thm:main} is thus a converse to Sjogren's theorem: the
best general constraint on the quotients $\delta_n(G)/\gamma_n(G)$ is
precisely that they are abelian of bounded exponent.

Our method is in principle applicable in a broad setting, producing
for every $K(\pi,1)$ space $X$ and integer $s$ a group $G$, an integer
$n$ and a homomorphism
\[\textsf{torsion}\big(\pi_{s+1}(\Sigma X)\big)\to\delta_n(G)/\gamma_n(G),\]
which we expect to be injective under mild finiteness conditions on $X$.

Homotopy groups of spheres are so fundamental objects that they
pervade topology, with applications ranging from Brouwer's fixed-point
theorem to Rokhlin's theorem on signatures of spin
$4$-manifolds. Their non-triviality and finiteness (apart from the
$\pi_n(S^n)$ and $\pi_{4n-1}(S^{2n})$) are among the most profound
results of mathematics. Theorem~\ref{thm:main} shows that they are
also tightly linked to a question in pure algebra.

Cohen, Wu and their coauthors revealed deep links between
combinatorial group theory and
homotopy~\cites{Cohen:1993,CohenWu:2001,Wu:2001,Wu:2010,CohenWu:2011,LiWu:2011,HuangWu:2020}. At
the heart of our method is a formula by Wu, expressing homotopy groups
of spheres as quotients between two subgroups of a finitely generated
free group. It is based on simplicial sets and corresponding
simplicial groups, see May's fundamental reference~\cite{May:1967}.
    
Topological methods are inherent to the modern study of group theory,
as witnessed by the monumental treatises by Gromov~\cite{Gromov:1991},
Bridson\&Haefliger~\cite{Bridson-Haefliger:1999} and
Geoghegan~\cite{Geoghegan:2008}.
Stallings~\cite{Stallings:1975}*{page~117}, in a programme carried out
by Sjogren~\cite{Sjogren:1979}, already recognized the value of
homological arguments towards studying dimension quotients.

Nonetheless, Theorem~\ref{thm:main} is the first instance of a
classical problem in algebra that is solved using higher algebraic
topology, in effect harnessing the powerful instruments of Steenrod
algebra and spectral sequences, notably the Adams, Curtis and May
spectral sequences.

\subsection{History of the dimension problem}
The dimension problem has a long history, starting with Magnus's
investigation of the lower central series of a free group, and its
associated Lie algebra~\cite{Magnus:1935}; he showed with Witt that
the dimension property holds for free groups,
see~\cites{Magnus:1937,Witt:1937}.  For a small subset of the
literature we refer
to~\cites{Tahara:1977a,Passi:1979,Gupta:1987,Gupta:1988,Mikhailov-Passi:2009}. Remarkably,
incorrect proofs of the dimension problem appeared more than once, by
Cohn~\cite{Cohn:1952}, Losey~\cite{Losey:1960} --- Lyndon remarks
dryly, in his MathSciNet review, ``\textit{The main content of this
  paper is another incomplete proof that the (integral) dimension
  subgroups of an arbitrary group are the terms of its lower central
  series}'' --- and even Magnus himself~\cite{MKS}*{Theorem~5.15(i)}!

We note that if one replaces the ring $\Z$ by a field, then there is
an elegant and elementary description of the corresponding dimension
subgroup, depending only on the field's characteristic;
see~\cites{jennings:gpringnilp,lazard:nilp}.

The dimension problem is quantitatively studied in terms of the
quotients $\delta_n(G)/\gamma_n(G)$ called \emph{dimension
  quotients}. Gupta and Kuzmin proved in~\cite{Gupta-Kuzmin:1992} that
they are all abelian, and Sjogren proved in~\cite{Sjogren:1979} that
they have finite exponent, bounded by a function of $n$ only: there
exists a minimal $s(n)\in\N$ (at most $(n!)^n$) such that
$\delta_n(G)^{s(n)}\subseteq\gamma_n(G)\le\delta_n(G)$ holds for all
groups $G$.

In that terminology, one has $s(1)=s(2)=s(3)=1$, and Rips's example
implies $2\mid s(4)$, which generalizes to $2\mid s(n)$ for all
$n\ge4$. Passi~\cite{Passi:1968} gave $s(4)=2$, and
Tahara~\cite{Tahara:1981} gave $s(5)\in\{2,6\}$.

This can be improved in the case of metabelian groups: Gupta proved
in~\cite{Gupta:1991} that $s(n)$ is a power of $2$. He then claimed
that $s(n)$ is a power of $2$ for all groups --- a proof is published
in~\cite{Gupta:2002}; and even that it may be improved to $s(n)=2$ for
all $n\ge4$, see~\cite{Gupta:1996}. However, many parts of his
arguments were never fully understood.

Now our main result, stated above, shows that the function $s(n)$ is
unbounded, and its values cannot even be constrained to a finite
collection of primes.

\subsection{Lie rings}
An variant of the dimension problem may be asked for Lie rings;
namely, Lie algebras over $\Z$. Every Lie ring $A$ embeds in its
universal enveloping algebra $U(A)$, which also admits an augmentation
ideal. The dimension subrings are defined analogously by
$\delta_n(A)=A\cap\varpi^n$, see~\cite{Bartholdi-Passi:2015}. Again
$\delta_n(A)=\gamma_n(A)$ when $n\le3$, and there is a Lie ring $A$
with $\delta_n(A)/\gamma_n(A)=\Z/2$. Sjogren's bound also holds for
Lie rings~\cite{Sicking:2020}, and many details are simpler in the
category of Lie rings.

Even though we are not aware of any direct construction of a group
from a Lie ring or \emph{vice versa} that preserves dimension
quotients, it often happens that a presentation involving only powers
and commutators, which may therefore be interpreted either as group or
Lie algebra presentation, yields isomorphic dimension quotients.

To give a quick taste of dimension quotients in Lie rings, we
reproduce first an example due to Pierre Cartier of a Lie algebra over
a commutative ring $\Bbbk$ \emph{not} embedding in its universal
envelope~\cite{Cartier:1958}: consider
$\Bbbk=\mathbb F_2[x_0,x_1,x_2]/(x_0^2,x_1^2,x_2^2)$, and
\[A=\langle e_0,e_1,e_2\mid x_0 e_0+x_1 e_1+x_2 e_2=0\rangle\text{ qua $\Bbbk$-Lie algebra}.
\]
Then $\alpha\coloneqq x_0x_1[e_0,e_1]+x_0x_2[e_0,e_2]+x_1x_2[e_1,e_2]$
is non-trivial in $A$, but in any associative algebra it maps to
$(x_0 e_0+x_1 e_1+x_2 e_2)^2=0$.

Rips' example, or rather its Lie algebra
variant~\cite{Bartholdi-Passi:2015}*{Theorem~4.7}, is of a similar
spirit. In $\Bbbk=\Z$ one can of course not choose $x_i$ nilpotent;
but one may choose $x_i$ a large power of $2$ and impose relations
that guarantee that elements with large $2$-valuation are mapped far
in the lower central series: set $x_i=2^{2+i}$ and consider
\begin{equation}\label{eq:rips}
  \begin{split}
    \MoveEqLeft[4] A=\langle e_0,e_1,e_2,\dots\mid 2^{2i+2} e_i\in\gamma_2\text{ for all }i\in\{0,1,2\},\\
    &x_j x_k e_i\pm x_i x_k e_j\in 2^{2k+2}\gamma_2+\gamma_3\text{ for all }\{i,j,k\}=\{0,1,2\}\rangle
  \end{split}
\end{equation}
with the element $\alpha=\sum_{0\le i<j\le2}x_i x_j[e_i,e_j]$. Then
the relations imply
$\alpha\in
A\cap(\gamma_2(A)\cdot\gamma_2(A)+A\cdot\gamma_3(A))\subseteq\delta_4(A)$,
while it is easy to make choices of elements in $\gamma_2$ and
$2^{2+2k}\gamma_2+\gamma_3$ that yield, by direct computation, that
$\alpha$ has a non-trivial image in the quotient $A/\gamma_4(A)$.

It is even possible to write a $3$-related Lie algebra, based
on~\cite{Mikhailov-Passi:2009}*{Example~2.3}, that
satisfies~\eqref{eq:rips} and
$\alpha\in\delta_4(A)\setminus\gamma_4(A)$:
\[A=\langle e_0,e_1,e_2,z\mid 2^2e_0=[z,e_1+2e_2],
  2^4e_1=[z,-e_0+4e_2], 2^6e_2=[z,-2e_0-4e_1]\rangle
\]
with as before $\alpha=2^5[e_0,e_1]+2^6[e_0,e_2]+2^7[e_1,e_2]$.

This Lie algebra presentation may also be interpreted as a group presentation,
\[G=\langle e_0,e_1,e_2,z\mid e_0^4=[z,e_1]\cdot[z,e_2]^2,
  e_1^{16}=[z,e_0]^{-1}\cdot[z,e_2]^4,
  e_2^{64}=[z,e_0]^{-2}\cdot[z,e_1]^{-4}\rangle,
\]
in which the element
$\alpha=[e_0,e_1]^{32}[e_0,e_2]^{64}[e_1,e_2]^{128}$ belongs to
$\delta_4(G)\setminus\gamma_4(G)$.

\subsection{Homotopy groups of the two-sphere: main statement and sketch of proof}
According to a result of Wu~\cites{Wu:2001,Ellis-Mikhailov:2010}, we may
express the homotopy groups of spheres $\pi_{s+1}(S^2)$ as a quotient
of two normal subgroups in a free group. More precisely, write
$F_s=\langle x_0,\dots,x_s\mid x_0\cdots x_s=1\rangle$ a free group of
rank $s$ with one redundant generator, and for $i=0,\dots,s$ let $R_i$
denote the normal closure of $x_i$ in $F_s$. We write iterated
commutators as left-normed:
$[x_1,x_2,\dots,x_d]=[[\cdots[x_1,x_2],\dots],x_d]$, and denote by
$\Sigma_{s+1}$ the symmetric group on $\{0,\dots,s\}$. Then
\begin{equation}\label{eq:wu}
  \frac{R_0\cap\dots\cap R_s}{\prod\limits_{\rho\in\Sigma_{s+1}}[R_{\rho(0)},\dots, R_{\rho(s)}]}\simeq\pi_{s+1}(S^2).
\end{equation}
We can now state more precisely the main step towards our result:
\let\oldthethm=\thethm\def\thethm{\oldthethm\cprime}
\begin{thm}\label{thm:groupmain0}
  Given an integer $s\ge3$, there is for all $n$ large enough a group
  $G$ and a set-wise map $F_s\to G$ inducing via~\eqref{eq:wu} an
  injective homomorphism
  \[\pi_{s+1}(S^2)\hookrightarrow \delta_n(G)/\gamma_n(G).\]
  In particular the exponent of $\delta_n(G)/\gamma_n(G)$ is divisible
  by that of $\pi_{s+1}(S^2)$.
\end{thm}
We note that for $s=3$ this result produces a variant of Rips's
example~\cite{Rips:1972}, ``explaining'' the $2$-torsion in
$\delta_4(G)/\gamma_4(G)$ as that of $\pi_4(S^2)$;
see~\S\ref{ss:examples2}.

An analogous result holds in the realm of Lie algebras. There,
starting with a free Lie ring
$L_s=\langle x_0,\dots,x_s\mid x_0+\dots+x_s=0\rangle$, define
analogously ideals $I_i=\langle x_i\rangle^{L_s}$; then
\begin{equation}\label{eq:liewu}
  \frac{I_0\cap\dots\cap I_s}{\prod\limits_{\rho\in\Sigma_{s+1}}[I_{\rho(0)},\dots, I_{\rho(s)}]}\simeq\bigoplus_{i\ge1}E_{i,s}^1,
\end{equation}
the $s$th column of the lower central series spectral sequence for
$S^2$. Our result, for Lie algebras, states:
\begin{thm}\label{thm:liemain0}
  Given an integer $s\ge3$, there is for all $n$ large enough a Lie ring
  $A$ and a linear map $L_s\to A$ inducing via~\eqref{eq:liewu} an
  injective homomorphism
  \[\bigoplus_{i\ge1}E_{i,s}^1\hookrightarrow \delta_n(A)/\gamma_n(A).\]
  In particular the exponent of $\delta_n(A)/\gamma_n(A)$ is divisible
  by all primes appearing in the order of
  $\bigoplus_{i\ge1}E_{i,s}^1$.
\end{thm}
\let\thethm=\oldthethm\setcounter{thm}{0}
The spectral sequence $E_{*,s}^*$ converges to $\pi_{s+1}(S^2)$, so
for $s=2p-1$ there is an order-$p$ term $\alpha_p\in E_{s+1,s}^1$ that
survives as the Serre element in $\pi_{s+1}(S^2)$.  We are able to
write it explicitly in terms of a shuffle product.

In fact, the groups $G$ and Lie algebras $A$ appearing in
Theorems~\ref{thm:groupmain0},\ref{thm:liemain0} can be written quite
concretely. In a group or Lie algebra presentation, we introduce the
following notation: for $d\in\N$, when we write a generator $x^{(d)}$
of degree $d$ we mean a list of generators $x_1,\dots,x_d$; when $x$
appears in a relator, it is a shorthand for the left-normed iterated
commutator $x\coloneqq[x_1,\dots,x_d]$ of the generators
$x_1,\dots,x_d$. Thus `$\langle x_1^{(2)},y^{(3)}\mid [x_1,y]\rangle$'
is shorthand for
`$\langle
x_{1,1},x_{1,2},y_1,y_2,y_3\mid[[x_{1,1},x_{1,2}],[y_1,y_2,y_3]]\rangle$'. The
following results, which precise Theorems~\ref{thm:groupmain0}
and~\ref{thm:liemain0}, will respectively be proven
in~\S\ref{ss:groupproof} and~\S\ref{ss:lieproof}.

\begin{thm}\label{thm:groupmain}
  Given an integer $s\ge3$, there are integers $e,c_0,\dots,c_s$ and
  $n=c_0+\dots+c_s$ such that, in the group
  \[G=\left\langle\begin{array}{ll} x_0,\dots,x_s,y_0^{(c_0)},\dots,y_s^{(c_s)},\\(r_w)_{w\in\langle x_0,\dots,x_s\rangle}\end{array}\middle|\begin{array}{ll} x_0\cdots x_s=1,x_i^{e^{n c_i}}=y_i\text{ for }i=0,\dots,s,\\ r_w^{e^n}=w\text{ for all }w\in\langle x_0,\dots,x_s\rangle\end{array}\right\rangle
  \]
  the map
  $\iota\colon w(x_0,\dots,x_s)\in F_s\mapsto w(x_0,\dots,x_s)^{e^{n^2}}$
  induces via~\eqref{eq:wu} an injective homomorphism
  \[\overline\iota\colon\pi_{s+1}(S^2)\hookrightarrow\delta_n(G)/\gamma_n(G).\]
\end{thm}
Note that only a finite number of roots $r_w$ of group elements are
required, though it seems messy to specify exactly which ones.

\begin{thm}\label{thm:liemain}
  Given an integer $s\ge3$, there are integers $e,c_0,\dots,c_s$ and
  $n=c_0+\dots+c_s$ such that, in the Lie ring
  \[A=\langle x_0\dots,x_s,y_0^{(c_0)},\dots,y_s^{(c_s)}\mid x_0+\dots+x_s=0, e^{c_i}x_i=y_i\text{ for }i=0,\dots,s\rangle
  \]
  the linear map $\iota\colon w(x_0,\dots,x_s)\in L_s\mapsto e^n w(x_0,\dots,x_s)\in A$
  induces via~\eqref{eq:liewu} an injective homomorphism
  \[\overline\iota\colon\bigoplus_i E_{i,s}^1\hookrightarrow\delta_n(A)/\gamma_n(A).\]
\end{thm}

The constants $e$ and $c_i$ are somewhat explicit, based on the
exponent of $\pi_{s+1}(S^2)$ and the connectivity of certain
simplicial groups. We have determined tighter values for $p=2$ and
$p=3$, see~\S\ref{ss:examples}.

Here is a sketch of the proof in the group case; the Lie algebra case
is essentially the same, and slightly simpler. The first claim follows
from the fact that $G$ is a free product with amalgamation. The second
claim splits in three parts:
$\iota(\prod_\rho[R_{\rho(0)},\dots,R_{\rho(s)}])\le\gamma_n(G)$,
which follows from standard commutator calculus;
$\iota(R_0\cap\dots\cap R_s)\le\delta_n(G)$, which boils down to two
ingredients: the Hurewicz homomorphism, see
Proposition~\ref{prop:groupassoc}, connecting the group and
associative algebra universes, and the presence of roots $r_w$ of
elements $w\in\langle x_0,\dots,x_s\rangle$ in $G$; and the last part,
$\iota^{-1}(\gamma_n(G))\cap R_0\cap\dots\cap
R_s\le\prod_\rho[R_{\rho(0)},\dots,R_{\rho(s)}]$.

For this last part, we first invoke Curtis' connectedness
theorem~\cite{Curtis:1965}, from which there is an integer $k$ such
that
$\gamma_k(F_s)\cap R_0\cap\dots\cap
R_s\le\prod_\rho[R_{\rho(0)},\dots,R_{\rho(s)}]$. It therefore
suffices to prove
$\iota^{-1}(\gamma_n(G))\cap R_0\cap\dots\cap
R_s\le\prod_\rho[R_{\rho(0)},\dots,R_{\rho(s)}]\gamma_k(F_s)$.

We then use the particular form of the presentation of $G$: consider
an element of $F_s$, identified with its image in $G$, and write it in
terms of commutators $g=[z_1,\dots,z_j]$ with each
$z_i\in\{x_0,\dots,x_s\}$, for some $j<k$. This element $g$ seemingly
defines an element of $\gamma_j(G)$, but in $G$ it may be rewritten,
in the presence of sufficiently high powers of $e$, into a commutator
of higher weight by replacing some $x_i$ by the corresponding
$y_i$. If the $c_i$ are chosen such that $c_0\ge k$ and
$c_i/c_{i-1}\ge k$ for all $i$, then in order for $g$ to belong to
$\gamma_n(G)$ either each $x_i$ must have been replaced at least once
by a $y_i$, so the commutator had to belong to some
$[R_{\rho(0)},\dots,R_{\rho(s)}]$, or a larger power of $e$ is
required, and therefore the original term in $F_s$ itself was a power
of $e$. In this manner we obtain
$g\in\prod_\rho[R_{\rho(0)},\dots,R_{\rho(s)}]F_s^e\gamma_k(F_s)$.

In effect, we use two filtrations on $G$, by the lower central series
and by powers of $e$. Only the substitution $x_s\rightsquigarrow y_s$
increases much the degree in the first filtration; but it consumes a
high degree in the second. All the other substitutions
$x_i\rightsquigarrow y_i$ for $i<s$ involve a trade-off between how
much they increase either degree, and each one requires the previous
one. Finally the substitutions $x_i\rightsquigarrow v_i$ or
$y_i\rightsquigarrow w_i$ decrease the first degree too much to be of
any use in attaining $\gamma_n(G)$.

We derive in Theorem~\ref{thm:pins2} an expression for the $p$-torsion
of $\pi_{2p}(S^2)$, first at the level of Lie algebras, namely on the
first page of the Curtis spectral sequence, and deduce in
Proposition~\ref{prop:groupalphap} some properties of its
representation $\widetilde\alpha_p$ as an element of the free group
$F_{2p-1}$. We make use of an explicit form for $p=2$ and $p=3$ to
obtain smaller examples, in particular for $p=2$ we obtain
straightforward constructions, for arbitrary $n\ge4$, of Lie algebras
in which $\delta_n/\gamma_n$ contains $2$-torsion, and for $p=3$ we
obtain a Lie algebra and a group in which $\delta_7/\gamma_7$ contains
$3$-torsion. These examples have also been checked using computer
algebra software.

\subsection{Wedges of spheres}
An analogous statement to~\eqref{eq:wu} holds for wedges of
two-spheres, and even more generally for suspensions of spaces with
contractible universal cover. We restrict ourselves here to the space
$S^2\vee S^2$, which is sufficient to prove Theorem~\ref{thm:main}.

Consider the group $\overline F_s=F_s*F_s$, and identify the generators of
its factors as $x_{i,j}$ for $i=0,\dots,s$ and $j\in\{1,2\}$; set $\overline R_i=\langle x_{i,1},x_{i,2}\rangle^{\overline F_s}$. Then
\begin{equation}\label{eq:wu2}
  \frac{\overline R_0\cap\dots\cap\overline R_s}{\prod\limits_{\rho\in\Sigma_{s+1}}[\overline R_{\rho(0)},\dots,\overline R_{\rho(s)}]}\simeq\pi_{s+1}(S^2\vee S^2).
\end{equation}
Analogously to Theorem~\ref{thm:groupmain}, we have
\begin{thm}\label{thm:groupmain2}
  Given an integer $s\ge3$, there are integers $e,c_0,\dots,c_s$ and
  $n=c_0+\dots+c_s$ such that, in the group
  \[G=\left\langle\begin{array}{ll} x_{0,1},x_{0,2},\dots,x_{s,1},x_{s,2},\\y_{0,1}^{(c_0)},y_{0,2}^{(c_0)},\dots,y_{s,1}^{(c_s)},y_{s,2}^{(c_s)},\\(r_w)_{w\in\langle x_{0,1},x_{0,2},\dots,x_{s,1},x_{s,2}\rangle}\end{array}\middle|\begin{array}{ll} x_{0,1}\cdots x_{s,1}=x_{0,2}\cdots x_{s,2}=1,\\x_{i,j}^{e^{n c_i}}=y_{i,j}\text{ for }i=0,\dots,s,i\in\{1,2\},\\r_w^{e^n}=w\text{ for all }w\in\langle x_{0,1},x_{0,2},\dots,x_{s,1},x_{s,2}\rangle\end{array}\right\rangle
  \]
  the map
  $\iota\colon w(x_{i,j})\in F_s*F_s\mapsto w(x_{i,j})^{e^{n^2}}$
  induces via~\eqref{eq:wu2} an injective homomorphism
  \[\overline\iota\colon\textsf{torsion}\big(\pi_{s+1}(S^2\vee S^2)\big)\hookrightarrow\delta_n(G)/\gamma_n(G).\]
\end{thm}
Note that only a finite number of roots $r_w$ of group elements are
required, though it seems messy to specify exactly which ones. Note
also that the group constructed in Theorem~\ref{thm:groupmain2} is,
apart from the adjunction of roots $r_w$, the free product of two
copies of the group constructed in Theorem~\ref{thm:groupmain}.

There is a Lie algebra analogue to Theorem~\ref{thm:groupmain2}, which
we do not state because it does not seem to have any applications.  In
particular, we do not know whether there exists a Lie algebra such
that one of its dimension quotients contains $\Z/p^2\Z$-torsion for
some prime $p$. Indeed the torsion in the first page of the spectral
sequence converging to $\pi_*(S^2\vee S^2)$ has only prime
orders. There seems to be a fundamental difference, here between
groups and Lie algebras.

\section{Dimension quotients}
We recall in this section some classical facts about dimension
quotients, and their Lie algebra equivalents:
\begin{prop}[\cite{Sjogren:1979} and \cite{Gupta-Kuzmin:1992}]
  For arbitrary group $G$ or Lie algebra $A$, the quotient
  $\delta_n(G)/\gamma_n(G)$, respectively $\delta_n(A)/\gamma_n(A)$,
  is abelian of bounded exponent.
\end{prop}
The ``bounded exponent'' statement is due to Sjogren. In his notation,
let $b_m$ denote the least common multiple of $\{1,\dots,m\}$, and
define integers $a_i^j$ and $c_n$ recursively by
\[a^n_1=1,\qquad a^n_{k+1}=\prod_{i=1}^k a_i^{n-k+i}b_{n-k},\qquad c_n=\prod_{k=1}^{n-1}a_k^k.
\]
Then for any group $G$ we have
$\delta_n(G)^{c_n}\subseteq\gamma_n(G)$. By~\cite{Sicking:2020}, the
same result holds in Lie algebras: for any Lie algebra $A$ we have
$c_n\delta_n(A)\subseteq\gamma_n(A)$.

Gupta and Kuzmin prove even that the quotient
$\delta_n(G)/\gamma_{n+1}(G)$ is abelian;
following~\cite{Mikhailov-Passi:2009} the same result is easily seen
to hold in Lie algebras. We reproduce the argument, in the Lie algebra
case, because of its simplicity:
\begin{proof}[Proof that $\delta_n(A)/\gamma_{n+1}(A)$ is abelian]
  Let $A$ be nilpotent of class $n$; we are to show that $\delta_n(A)$
  is abelian.  Let $M$ be maximal abelian normal in $A$; so $M$ is an
  $A$-module via adjunction. For any $m\in M,x\in\delta_k(A)$ we have
  $[m,x]\in\gamma_{k+1}(M)$, so $\delta_n(A)$ centralizes $M$. Now
  since $M$ is maximal it is self-centralizing, so $\delta_n(A)\le M$
  and therefore is abelian.
\end{proof}

\section{Homotopy groups of spheres}\label{ss:hs}
We describe in this section the group-theoretic and
Lie-algebra-theoretic formulations of homotopy groups of spheres. They
will be essential in the proofs of Theorems~\ref{thm:liemain}
and~\ref{thm:groupmain}. We use `$n$' in this section for what is
written `$s$' in the rest of the text, to avoid confusion with the
degeneracies $s_i$ in simplicial objects.

\subsection{Groups} Fix an integer $n\geq 1$ and let
$F=\langle x_0,\dots, x_n\mid x_0\cdots x_n=1\rangle$ be a free group of
rank $n$. Consider its normal subgroups
\[R_i\coloneqq \langle x_i\rangle^F\text{ for }i=0,\dots,n.
\]
Note that $F$ is the fundamental group of a $2$-sphere with $n+1$
punctures, and $R_i$ contains the conjugacy class of a loop around the
$i$th puncture; the operation of filling-in the $i$th puncture induces
the map $F\to F/R_i$ on fundamental groups.

Denote by $\Sigma_{n+1}$ the symmetric group on $\{0,\dots,n\}$, and
define the symmetric commutator product of the above subgroups by
\[
[R_0,\dots, R_n]_\Sigma\coloneqq\prod_{\rho\in\Sigma_{n+1}}[R_{\rho(0)},\dots, R_{\rho(n)}].
\]
Here and below the iterated commutators are assumed to be left-normalized,
namely $[R_0,R_1,R_2]=[[R_0,R_1],R_2]$ etc.

We view the circle $S^1$ as a simplicial set. Milnor's $F$
construction produces a group complex, having in degree $n$ a free
group on the degree-$n$ objects of $S^1$ subject to a single relation
($s_0^n(*)=1$) and the same boundaries and degeneracies as
$S^1$. According to a formula due to Jie
Wu~\cites{Wu:2001,Ellis-Mikhailov:2010}, considered in the standard basis
of Milnor's $F[S^1]$-construction, homotopy groups of the sphere $S^2$
can be presented in the following way:
\[
\pi_{n+1}(S^2)\simeq \frac{R_0\cap\dots\cap R_n}{[R_0,\dots,R_n]_\Sigma}.
\]

Consider now for $i=0,\dots,n$ the ideals $\rr_i\coloneqq(x_i-1)\Z[F]$ in the
free group ring $\Z[F]$, and their symmetric product
\[
  (\rr_0,\dots,\rr_n)_\Sigma\coloneqq \sum_{\rho\in\Sigma_{n+1}}\rr_{\rho(0)}\cdots \rr_{\rho(n)}
\]
which is also an ideal in $\Z[F]$.
\begin{prop}\label{prop:groupassoc}
  For $n\geq 3$ we have
  $R_0\cap\dots\cap R_n\le F\cap(1+(\rr_0,\dots, \rr_n)_\Sigma)$ when
  considered in $\Z[F]$.
\end{prop}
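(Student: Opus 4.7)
The claim is equivalent to showing that every $w\in R_0\cap\cdots\cap R_n$ satisfies $w-1\in X:=(\rr_0,\ldots,\rr_n)_\Sigma$. As a preliminary observation, $F\cap(1+\rr_i)=R_i$: indeed, $\rr_i=\ker(\Z[F]\to\Z[F/R_i])$ and $F/R_i$ embeds in its integral group ring. This takes care of the reverse inclusion automatically and isolates the content of the proposition.

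The plan is to decompose $w$ via Wu's formula as $w=cw'$, with $c\in[R_0,\ldots,R_n]_\Sigma$ and $w'$ a fixed representative of a class of $\pi_{n+1}(S^2)$. Since $X$ is a two-sided ideal, the identity $cw'-1=(c-1)w'+(w'-1)$ reduces the problem to showing separately $c-1\in X$ and $w'-1\in X$. The commutator part is handled by iterating the identity $[a,b]-1=\bigl((a-1)(b-1)-(b-1)(a-1)\bigr)a^{-1}b^{-1}$: for $a_i\in R_{\rho(i)}$ this yields
\[
[a_0,\ldots,a_n]-1\in\sum_{\sigma\in\Sigma_{n+1}}\rr_{\rho\sigma(0)}\cdots\rr_{\rho\sigma(n)}\subseteq X,
\]
and extending through products via $ab-1=(a-1)b+(b-1)$ covers all of $[R_0,\ldots,R_n]_\Sigma$.

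The crux is to prove $w'-1\in X$. A useful elementary lemma here is that any product $\rr_{i_1}\cdots\rr_{i_k}$ whose index set $\{i_1,\ldots,i_k\}$ covers all of $\{0,\ldots,n\}$ already lies in $X$: extract a length-$(n{+}1)$ subsequence which is a permutation of $\{0,\ldots,n\}$, and absorb the remaining factors using $\rr_i\cdot\Z[F]\subseteq\rr_i$ and $\Z[F]\cdot\rr_i\subseteq\rr_i$. Combined with a Magnus-type analysis (choose $x_0,\ldots,x_{n-1}$ as free generators of $F$, so $x_n$ becomes a specific power series via the defining relation), the hypotheses $w'\in R_i$ for each $i$ force each monomial of $w'-1$ in the Magnus expansion to involve every index $0,\ldots,n$, whence $w'-1\in X$. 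The bound $n\geq 3$ is sharp: for $n=2$, the generator $[x_0,x_1]$ of $\pi_3(S^2)=\Z$ has augmentation depth $2<3$ and does not lie in $(\rr_0,\rr_1,\rr_2)_\Sigma$.

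The main obstacle is the final step of the Magnus analysis: the asymmetric treatment of $x_n$, which is eliminated via the defining relation, makes the condition ``$w'\in R_n$'' awkward to translate into a clean statement on Magnus coefficients. This may be handled by direct combinatorial bookkeeping, by an induction on $n$ exploiting the surjection $F\to F/R_n$, or by appealing to the symmetric treatment afforded by the geometric description of $F$ as $\pi_1$ of an $(n+1)$-punctured sphere.
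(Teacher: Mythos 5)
There is a genuine gap, and it sits exactly where you locate "the crux". Your reduction $w=cw'$ via Wu's formula plus the commutator identities correctly disposes of the symmetric commutator part, but it carries no content: proving $w'-1\in X$ for representatives of all classes of $\pi_{n+1}(S^2)$ \emph{is} the proposition. For that step your proposed mechanism --- Magnus-expansion bookkeeping showing that $w'\in R_0\cap\dots\cap R_n$ forces every monomial to involve every index, "whence $w'-1\in X$" --- cannot work. Membership in each $R_i$ only gives $w'-1\in\rr_0\cap\dots\cap\rr_n$, and by the result of Mikhailov--Passi--Wu used in the paper this intersection is strictly larger than $X$: the quotient $(\rr_0\cap\dots\cap\rr_n)/(\rr_0,\dots,\rr_n)_\Sigma$ is the homology group $H_n(\Omega S^2)$, which is infinite cyclic, not zero. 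So no support-type condition on monomials that is a consequence of lying in the ideals $\rr_i$ can force membership in $X$; and your covering lemma is about products of elements of the ideals $\rr_i$, not about Magnus monomials --- exhibiting $w'-1$ as a sum of such index-covering products is equivalent to the conclusion, not a tool for it. Note also that your sketch nowhere uses $n\ge3$: the element $[x_0,x_1]$ for $n=2$ lies in $R_0\cap R_1\cap R_2$ and satisfies all the same formal conditions, yet (as you observe) the statement fails there. So the difficulty is not the "asymmetric treatment of $x_n$" you flag; it is that the statement genuinely requires a non-combinatorial input.

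That input is Serre's finiteness theorem, and the paper's proof consists of exactly this: the map $f\mapsto f-1$ induces, on the quotients appearing in Wu's formula and in its linearization, the Hurewicz homomorphism $\pi_n(\Omega S^2)\to H_n(\Omega S^2)$; for $n\ge3$ the source $\pi_{n+1}(S^2)$ is finite while the target is $\Z$ (the degree-$n$ part of the tensor algebra on $H_1(S^1)$), so the map is zero, i.e.\ $w-1\in(\rr_0,\dots,\rr_n)_\Sigma$ for every $w\in R_0\cap\dots\cap R_n$. If you want to salvage your outline, the finiteness must enter: for instance, from $(w')^N\in[R_0,\dots,R_n]_\Sigma$ you get that the class of $w'-1$ in $(\rr_0\cap\dots\cap\rr_n)/X$ is torsion, and you must then argue that this quotient is torsion-free (which is again the MPW identification with $H_n(\Omega S^2)\cong\Z$); a purely Magnus-theoretic route would have to reprove that torsion-freeness, not just track monomial supports.
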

\begin{proof}
  It is shown in~\cite{MPW} that the quotient
  $\frac{\rr_0\cap\dots\cap \rr_n}{(\rr_0,\dots, \rr_n)_\Sigma}$ can
  be viewed as the $n$th homotopy group of the simplicial abelian
  group $\Z[F[S^1]]$, and the map $F\to \Z[F]$ given by $f\mapsto f-1$
  induces the following commutative diagram
  \[\begin{tikzcd}
      \displaystyle\frac{R_0\cap\dots\cap R_n}{[R_0,\dots,R_n]_\Sigma} \ar[d,equal]\ar[r] & \displaystyle\frac{\rr_0\cap\dots\cap \rr_n}{(\rr_0,\dots,\rr_n)_\Sigma} \ar[d,equal]\\
      \makebox[0mm][r]{$\pi_{n+1}(S^2)={}$}\pi_n(\Omega S^2)\ar[r] & H_n(\Omega S^2).
    \end{tikzcd}
  \]
  The lower map is the $n$th Hurewicz homomorphism for the loop space
  $\Omega S^2$. Since all homotopy groups $\pi_n(\Omega S^2)$ are
  finite for $n\geq 3$, but all homology groups $H_n(\Omega S^2)$ are
  infinite cyclic ($H_*(\Omega S^2)$ is the tensor algebra generated
  by the homology of $S^1$ in dimension
  one~\cite{Bott-Samelson:1953}), we conclude that, for $n\geq 3$, the
  map in the above diagram is zero.
\end{proof}

\subsection{Lie algebras} One obtains an analogous picture in the case
of Lie algebras over $\Z$. The homotopy groups of the simplicial Lie
algebra
\[L[S^1]\coloneqq \bigoplus_i \gamma_i(F[S^1])/\gamma_{i+1}(F[S^1])\]
are equal to the direct sum of terms in rows of the $E^1$-term of the
Curtis spectral sequence
\[
E_{i,j}^1\coloneqq\pi_j\big(\gamma_i(F[S^1])/\gamma_{i+1}(F[S^1])\big)\Longrightarrow \pi_{j+1}(S^2).
\]
The mod-$p$-lower central series spectral sequence is well-studied,
see for example, the foundational paper~\cite{6A}. The integral case
which we consider here has similar properties,
see~\cites{BM,Leibowitz:1972}. Here we will only need elementary
properties of this spectral sequence.

Observe that the $E^1$-page of the above spectral sequence consists of
derived functors $\lder_j$ in the sense of Dold-Puppe, applied to Lie
functors: if $\Lie^i$ denotes the $i$th Lie functor in the category of
abelian groups, then
\[
\pi_j\big(\gamma_i(F[S^1])/\gamma_{i+1}(F[S^1])\big)=\lder_j\Lie^i(\Z,1).
\]
Recall the definition of derived functors. Let $B$ be an abelian
group, and let $F$ be an endofunctor on the category of abelian
groups. For every $i,n\geq 0$ the derived functors of $F$ in the sense
of Dold-Puppe \cite{DP} are defined by
\[
\lder_i F(B,n)=\pi_i(F K P_\ast[n])
\]
where $P_\ast \to B$ is a projective resolution of $B$, and $K$ is the
Dold-Kan transform, inverse to the Moore normalization functor from
simplicial abelian groups to chain complexes. We denote by
$\lder F(B,n)$ the object $F K(P_\ast[n])$ in the homotopy category of
simplicial abelian groups determined by $F K(P_\ast[n])$, so that
$\lder_i F(B,n) = \pi_i(\lder F(B,n))$.

Consider a free Lie algebra $L$ over $\Z$ with generators
$x_0,\dots,x_n$ and relation $x_0+\dots+x_n=0$, and the Lie ideals
\[I_i\coloneqq \langle x_i\rangle^L\text{ for }i=0,\dots,n.
\]
Define their symmetric product by
\[
[I_0,\dots, I_n]_\Sigma=\sum_{\rho\in \Sigma_{n+1}} [I_{\rho(0)},\dots, I_{\rho(n)}].
\]
The same arguments as in the group case imply the Lie analog of Wu's formula:
\[
\frac{I_0\cap\dots\cap I_{n}}{[I_0,\dots, I_{n}]_\Sigma}\simeq \bigoplus_{i\geq 1}E_{i,n}^1=\bigoplus_{i\geq 1} \lder_n\Lie^i(\Z,1).
\]
In fact --- but we shall not need this --- each term $E_{i,n}^1$ may
be singled out by filtering $L$ via its lower central series: one has
\[
  \frac{I_0\cap\dots\cap I_n\cap \gamma_i(L)}{\big([I_0,\dots,I_n]_\Sigma\cap \gamma_i(L)\big)+(I_0\cap\dots\cap I_n\cap \gamma_{i+1}(L))}\simeq E_{i,n}^1.
\]

Consider the universal enveloping algebra $U(L)$, the corresponding ideals $\ii_i\coloneqq x_i U(L)$ in $U(L)$, and their symmetric
product:
\[
(\ii_0,\dots, \ii_n)_\Sigma=\sum_{\rho\in\Sigma_{n+1}}\ii_{\rho(0)}\cdots \ii_{\rho(n)}.
\]
\begin{prop}\label{prop:lieassoc}
  For $n\geq 3$ we have
  $I_0\cap\dots\cap I_n\le L\cap(\ii_0,\dots, \ii_n)_\Sigma$ when
  considered in the universal enveloping algebra.
\end{prop}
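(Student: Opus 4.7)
The plan is to mirror the proof of Proposition~\ref{prop:groupassoc} in the Lie setting. The inclusion $L\hookrightarrow U(L)$ plays the role of the map $f\mapsto f-1$, and the universal enveloping algebra replaces the group ring.

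The main step is to establish the identification
\[
\frac{\ii_0\cap\dots\cap\ii_n}{(\ii_0,\dots,\ii_n)_\Sigma}\cong \pi_n\bigl(U(L[S^1])\bigr)
\]
by adapting the argument of \cite{MPW} and \cite{Ellis-Mikhailov:2010} from the group ring to the universal enveloping algebra. By Poincar\'e--Birkhoff--Witt, $U(L[S^1])$ is isomorphic, as a simplicial abelian group, to the tensor algebra $T(V[S^1])$ on the abelianization $V[S^1]=L[S^1]^{ab}$. Since $V[S^1]\simeq \tilde\Z[S^1]\simeq K(\Z,1)$ as simplicial abelian groups, iterated K\"unneth yields
\[
\pi_n\bigl(U(L[S^1])\bigr)=\bigoplus_{i\geq 0}\lder_n T^i(\Z,1)=H_n(\Omega S^2)\cong\Z
\]
for each $n\geq 0$, the unique nonzero contribution being the summand $i=n$.

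Combined with the displayed Lie analog of Wu's formula, this fits into a commutative square
\[
\begin{tikzcd}
\displaystyle\frac{I_0\cap\dots\cap I_n}{[I_0,\dots,I_n]_\Sigma}\ar[r]\ar[d,equal] & \displaystyle\frac{\ii_0\cap\dots\cap\ii_n}{(\ii_0,\dots,\ii_n)_\Sigma}\ar[d,equal]\\
\pi_n(L[S^1])\ar[r] & \pi_n(U(L[S^1]))=H_n(\Omega S^2),
\end{tikzcd}
\]
in which the bottom map is induced by the simplicial-abelian-group inclusion $L[S^1]\hookrightarrow U(L[S^1])$. This map is compatible with the comparison of spectral sequences arising from the lower-central-series filtration of $F[S^1]$ and the augmentation-ideal filtration of $\Z[F[S^1]]$: the former is the Curtis spectral sequence, with $E^1_{i,n}=\lder_n\Lie^i(\Z,1)$ converging to $\pi_{n+1}(S^2)$; the latter has $E^1_{i,n}=\lder_n T^i(\Z,1)$ converging to $H_n(\Omega S^2)$. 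On abutments this comparison realizes the Hurewicz homomorphism.

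For $n\geq 3$, Serre's theorem gives that $\pi_{n+1}(S^2)$ is finite, while $H_n(\Omega S^2)=\Z$ is torsion-free, so the Hurewicz map vanishes. Since the target spectral sequence is concentrated in a single bidegree (with $E^1=E^\infty$, as there is no room for nontrivial differentials), this vanishing propagates back to force the bottom map of the square above to be zero, completing the proof.

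The main obstacle is Step 1: extending the identification of \cite{MPW} from the group ring to the universal enveloping algebra, using the Lie-algebra analog of Milnor's $F[S^1]$ construction and its homotopy-theoretic interpretation. Once this is granted, the conclusion follows exactly as in the group case via Serre's finiteness theorem and the vanishing of the Hurewicz map.
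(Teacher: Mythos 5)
Your setup coincides with the paper's: the same comparison square induced by $L\to U(L)$, and the same identification of its right-hand column with $\Z$ (since $U$ of a free Lie ring is the tensor algebra, $\pi_n(U(L[S^1]))=\bigoplus_i\lder_n T^i(\Z,1)=H_n(\Omega S^2)=\Z$). The gap is in your final vanishing step. The bottom-left entry of the square is $\pi_n(L[S^1])=\bigoplus_{i\ge1}\lder_n\Lie^i(\Z,1)$, i.e.\ the $E^1$-page of the Curtis spectral sequence in homotopy degree $n$ --- not its abutment $\pi_{n+1}(S^2)$. Serre's theorem is a statement about the abutment, and the claim that the vanishing of the Hurewicz map ``propagates back'' to force the $E^1$-level map to vanish is not a valid inference: for a map of spectral sequences, only classes that survive to $E^\infty$ are controlled by the map of abutments. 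An element of the source $E^1$ that supports a nonzero differential (so dies before $E^\infty$) may perfectly well map to a nonzero element of the degenerate target $E^1$ while the abutment map is zero; easy filtered-complex examples exhibit this, and the degeneracy of the target sequence gives no help for such elements. In the Curtis spectral sequence nonzero differentials abound --- that is exactly why $\pi_{n+1}(S^2)$ is so much smaller than the $E^1$-page --- so finiteness of the abutment places no constraint on the map out of $E^1$.

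What is actually needed, and what the paper invokes, is finiteness of the $E^1$-terms themselves: by Schlesinger~\cite{Schlesinger}, $\lder_n\Lie^i(\Z,1)$ is finite for all $n\ge3$, so the source of the bottom map is a torsion group and every homomorphism from it to the torsion-free group $\Z$ is zero. This is precisely the point where the Lie case differs from Proposition~\ref{prop:groupassoc}: in the group case Wu's formula identifies the left-hand quotient with the honest homotopy group $\pi_n(\Omega S^2)$, so Serre's finiteness applies directly, whereas in the Lie case the left-hand quotient is only the $E^1$-page and one must cite finiteness of the derived Lie functors instead. Your Step 1 (the identification of the $(\ii_0,\dots,\ii_n)_\Sigma$-quotient with $\pi_n(U(L[S^1]))$) is granted at the same level of detail as in the paper, which is reasonable; but the vanishing argument as you wrote it does not go through and should be replaced by the torsion-versus-torsion-free argument at the level of $E^1$.
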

\begin{proof}
  Similarly to the group case, the natural map $L\to U(L)$ induces
  \[\begin{tikzcd}
      \displaystyle\frac{I_0\cap\dots\cap I_n}{[I_0,\dots,I_n]_\Sigma} \ar[d,equal]\ar[r] & \displaystyle\frac{\ii_0\cap\dots\cap \ii_n}{(\ii_0,\dots,\ii_n)_\Sigma} \ar[d,equal]\\
      \displaystyle\bigoplus_{i\ge1} E_{i,n}^1\ar[r] & H_n\big(U(L[S^1])\big).
    \end{tikzcd}
  \]
  By~\cite{Schlesinger} the $E_{i,j}^1$-terms of the lower central
  series spectral sequence for $S^2$ are finite for all $j\geq 3$,
  while the universal enveloping simplicial algebra $U(L[S^1])$ has
  infinite cyclic homology groups in all dimensions. It follows that
  the map is $0$.
\end{proof}

\section{Proof of Theorem~\ref{thm:liemain}}\label{ss:lieproof}
We begin with the proof of Theorem~\ref{thm:liemain} on Lie algebras,
since it is slightly simpler, while conceptually similar, to the
corresponding statement for groups.  Let an integer $s\ge3$ be fixed
throughout this section.

\subsection*{First claim: \boldmath $\iota$ exists and is injective}
The assignment $w(x_0,\dots,x_s)\mapsto e^n w(x_0,\dots,w_s)$
naturally defines a linear map $\iota\colon L_s\to A$, since $L_s$'s
only relator holds in $A$. Furthermore, $A$ is an iterated amalgamated
free product, to wit start with
$\langle x_i (0\le i\le s)\mid x_0+\dots+x_s=0\rangle$ and repeatedly
amalgamate, for $i=0,\dots,s$, with
$\langle y_{i,j} (1\le j\le\ell+c_i)\rangle$ along a one-dimensional
subalgebra, so $\iota$ is injective by the normal form of amalgamated
free products, see~\cite{Bokut-Kukin:1994}*{Theorem~4.4.2}.

\subsection*{Second claim: \boldmath $\iota([I_0,\dots,I_s]_\Sigma)\le\gamma_n(A)$}
Recall $n=\deg(y_0)+\dots+\deg(y_s)$. Then
\begin{align*}
  \iota([I_0,\dots,I_s]_\Sigma)
  &\le e^n[\langle x_0\rangle^A,\dots,\langle x_s\rangle^A]_\Sigma = [\langle e^{c_0}x_0\rangle^A,\dots,\langle e^{c_s}x_s\rangle^A]_\Sigma\\
  &= [\langle y_0\rangle^A,\dots,\langle y_s\rangle^A]_\Sigma \le [\gamma_{c_0}(A),\dots,\gamma_{c_s}(A)]_\Sigma\le\gamma_n(A).
\end{align*}

\subsection*{Third claim: \boldmath $\iota(I_0\cap\dots\cap I_s)\le\delta_n(A)$}
\begin{align*}
  \iota(I_0\cap\dots\cap I_s) &= \iota(L_s\cap(\ii_0,\dots,\ii_s)_\Sigma)\text{ by Proposition~\ref{prop:lieassoc}}\\
                              &\le A\cap e^n(\langle x_0\rangle^{U(A)},\dots,\langle x_s\rangle^{U(A)})_\Sigma = A\cap(\langle e^{c_0}x_0\rangle^{U(A)},\dots,\langle e^{c_s}x_s\rangle^{U(A)})_\Sigma\\
  &= A\cap(\langle y_0\rangle^{U(A)},\dots,\langle y_s\rangle^{U(A)})_\Sigma\le A\cap(\varpi^{c_0},\dots,\varpi^{c_s})_\Sigma\le\delta_n(A).
\end{align*}

It follows that $\iota$ induces a map
$\overline\iota\colon(I_0\cap\cdots\cap
I_s)/[I_0,\dots,I_s]_\Sigma\to\delta_n(A)/\gamma_n(A)$, which is a
homomorphism since its domain (and range) are abelian.

\subsection*{Fourth claim: \boldmath $\overline\iota$ is injective}
It is time to specify more precisely the admissible parameters in the
construction of $A$. The parameter $e$ is the exponent of
$\bigoplus_i E_{i,s}^1$, or any multiple thereof. By the Curtis
connectivity theorem~\cite{Curtis:1965}, or more precisely its variant
for Lie algebras, there is an integer $k$ such that
$\gamma_k(F_s)\cap I_0\cap\dots\cap I_s\le[I_0,\dots,I_s]_\Sigma$: let
us quickly sketch the argument. For any connected free simplicial
group $F$, consider the associated Lie algebra
$L(F)=\bigoplus_i L^i(F_{\text{ab}})$. Now
$\gamma_n(L(F))=\bigoplus_{i\ge n} L^i(F_{\text{ab}})$. By Curtis'
theorem, $L^i(F_{\text{ab}})$ is $\log_2i$-connected, namely
$\pi_s L^i(F_{\text{ab}})=0$ for all $s\le \log_2i$. Therefore, fixing
$s$, we get
$\gamma_k(L)\cap I_0\cap\cdots\cap I_s\cap\le[I_0,\dots,I_s]_\Sigma$
for all $k\ge 2^s$. We apply this to $F=F[S^1]$ and its Lie algebra
$L[S^1]$.

We may choose the parameters $c_0,\dots,c_s$ arbitrarily so long as
$c_0\ge k$ and $c_i\ge k c_{i-1}$ for all $i=1,\dots,s$. To fix
matters, let us choose $c_i=k^{i+1}$.

Since $\iota$ is injective, we are to prove
$\iota^{-1}(\gamma_n(A))\cap I_0\cap\dots\cap
I_s\le[I_0,\dots,I_s]_\Sigma$. We note that $I_i\cap e L_s=e I_i$ for
all $i$, so
$I_0\cap\cdots\cap I_s\cap e L_s=e(I_0\cap\cdots\cap I_s)$. By the
choice of $k$ and $e$, it therefore suffices to prove
\[\iota^{-1}(\gamma_n(A)\cap I_0\cap\dots\cap I_s)\le[I_0,\dots,I_s]_\Sigma+e L_s+\gamma_k(L_s).
\]

Consider the free graded Lie algebra
$M=\langle x_0,\dots,x_s,y_0,\dots,y_s\rangle$, in which $y_i$ has
degree $c_i$: it admits a natural surjection
$\pi\colon M\twoheadrightarrow A$. Given
$v\in\langle x_0,\dots,x_s\rangle\le M$, consider the collection of
expressions in $\pi^{-1}(\pi(v))$ that are obtained by replacing, in
an expression of $v$, some terms $x_i$ by the corresponding $y_i$,
adjusting appropriately the power of $e$. We call $v$ \emph{in
  x-form}, and the corresponding expressions obtained by replacing
some $x_i$ by $y_i$ are called in \emph{xy-form}. We also denote by
$\rho$ the natural map
$e^n\langle x_0,\dots,x_s\rangle\subseteq M\twoheadrightarrow L_s$; we
have $\iota\circ\rho=\pi$.

Let us consider $w\in I_0\cap\dots\cap I_s$, and assume
$\iota(w)\in\gamma_n(A)$. We shall write $w=w_0+w_1+w_2$, with
$w_0\in[I_0,\dots,I_s]_\Sigma$ and $w_1\in e L_s$ and
$w_2\in\gamma_k(L_s)$. Now by assumption $\iota(w)$ may be written as
an element $\tilde v\in M$, all of whose terms have degree at least
$n$; we express this in two steps: first, $\iota(w)$ gives rise to an
x-form $v\in M$ by application of the relation $x_0+\dots+x_s=0$; and
then $v$ gives rise to an xy-form $\tilde v$ of $v$ by application of
the other relations, namely replacement of $x_i$ by $y_i$ with
absorption of $e^{c_i}$ in the coefficient. Indeed it follows from the
form of $A$ as an amalgamated free product that the xy-form $\tilde v$
may be obtained from $w$ first by selecting an appropriate x-form
using the relation $x_0+\dots+x_s=0$, and then converting it to
$\tilde v$; thus there is a natural bijection between the summands of
$v$ and $\tilde v$.

Since $M$ is graded and free, we may write $\tilde v$ in a standard
basis of free Lie algebras, such as a selection of left-normed
commutators.  Let us consider in turn all summands of $\tilde v$, a
typical one being of the form
$\tilde\theta\coloneqq[z_1,\dots,z_\ell]$ with all
$z_i\in\{x_0,\dots,x_s,y_0,\dots,y_s\}$; let $\theta$ be the
corresponding monomial in $v$.

If $\ell\ge k$, we put $\rho(\theta)$ in $w_2$. We may therefore from
now on suppose $\ell<k$. On the other hand, say for $i=0,\dots,s$ that
$n_i$ of the $z_j$'s are the generator $y_i$, and that $n_\infty$ of
the $z_j$'s are in $\{x_0,\dots,x_s\}$; then the weight of
$\tilde\theta$ is
\begin{equation}\label{eq:degreeineq}
  n_\infty+n_0 c_0+\dots+n_s c_s\ge n=c_0+\dots+c_s.
\end{equation}
Combining $0\le n_i<k$ for all $i$ with $c_i\ge k c_{i-1}$ and
$c_0\ge k$, we see by unicity of the base-$k$ representation of an
integer that either $n_0=\dots=n_s=1$ or $n_0 c_0+\dots+n_s c_s> n$.

In the former case, each of the $y_0,\dots,y_s$ in $\tilde\theta$ may
be replaced by the corresponding $x_0,\dots,x_s$ to produce a monomial
$\theta$ in $v$ with coefficient multiplied by $e^{c_0+\dots+c_s}$;
and then this summand belongs to $\iota([I_0,\dots,I_s]_\Sigma)$. Add
$\rho(\theta)$ to $w_0$.

In the latter case, replace again all $y_i$ in $\tilde\theta$ by the
corresponding $x_i$ to produce the monomial $\theta$ in $v$ with
coefficient multiplied by $e^{n_0 c_0+\dots+n_s c_s}$. Remembering
that its coefficient is divisible by $e^{n+1}$, add $\rho(\theta)$ to
$w_1$.

We have in this manner expressed $w$ in the required form
$w_0+w_1+w_2$, concluding the proof that $\overline\iota$ is
injective.

\section{Proof of Theorem~\ref{thm:groupmain}}\label{ss:groupproof}
The proof of Theorem~\ref{thm:groupmain} follows closely that of the
previous section; the main difference is that the connection between a
group and its group ring is not quite at tight as that between an
algebra and its universal enveloping algebra. We remedy this issue by
adding roots of elements of $\langle x_0,\dots,x_s\rangle$. Note that
only finitely many elements need a root, but we added all for
simplicity of the argument.

Let an integer $s\ge3$ be fixed throughout this section. We begin by
specifying more precisely the admissible parameters in the
construction of $G$. The parameter $e$ is the exponent of
$\pi_{s+1}(S^2)$, or any multiple thereof. By~\cite{Curtis:1965},
there is an integer $k$ such that
$\gamma_k(F_s)\cap R_0\cap\dots\cap R_s\le[R_0,\dots,R_s]_\Sigma$.  We
then choose $c_0,\dots,c_s$ as before subject to
$c_0,c_i/c_{i-1}\ge k$, for example $c_i=k^{i+1}$.

\subsection*{First claim: \boldmath $\iota$ exists and is injective}
The assignment $w(x_0,\dots,x_s)\mapsto w(x_0,\dots,w_s)^{e^n}$
naturally defines a map $\iota\colon F_s\to G$, since $F_s$'s only
relator holds in $G$. Furthermore, $G$ is an iterated amalgamated free
product, to wit start with
$\langle x_0,\dots,x_s\mid x_0\cdots x_s=1\rangle$ and repeatedly
amalgamate, for $i=0,\dots,s$, with
$\langle y_{i,j} (1\le j\le c_i)\rangle$ along a cyclic subgroup. Then
amalgamate, for $w\in\langle x_0,\dots,x_s\rangle$, with
$\langle r_w\rangle$ along a cyclic subgroup.  By the standard normal
form theorem for free products with amalgamation
(see~\cite{Lyndon-Schupp:1970}*{Theorem~IV.2.6}), the map $\iota$ is
injective.

\subsection*{Second claim: \boldmath $\iota([R_0,\dots,R_s]_\Sigma)\le\gamma_n(G)$}
The Lie algebra identities $e[x,y]=[e x,y]=[x,e y]$ do not quite hold
in groups, but in the presence of sufficient roots a close analogue
exists. For $g$ in a group $H$ we denote by $g^H$ the normal closure
of $\langle g\rangle$ in $H$, and by $[g,(c)h]$ the iterated
commutator $[g,h,\dots,h]$ with $c$ copies of `$h$':

\begin{lem}\label{lem:groupps}
  Let $H$ be a group, let $c,e$ be integers, and assume there are
  elements $g\in\gamma_m(H)$ and $h,v\in H$ with $v^{e^c}=h$. Then for
  all $d\in\N$ we have
  \begin{align}
    [g,h^{e^d}] &\in [g,h]^{e^d H}[g,(c+1)v]^H,\label{eq:groupps:1}\\
    [g,h]^{e^d} &\in [g,h^{e^d}]^H[g,(c+1)v]^H.\label{eq:groupps:2}
  \end{align}
\end{lem}
\begin{proof}
  It suffices to prove the statement for $d=1$, since it may be
  applied repeatedly to $[g,h^{e^i}]$, respectively $[g,h]^{e^i}$, for
  $i=1,\dots,d$. We thus restrict ourselves to $d=1$.

  The claims are proven by induction on $c$, the case $c=0$ being
  covered by the first line below. For~\eqref{eq:groupps:1}, write
  $v_1=v^{e^{c-1}}$ and note
  \begin{align*}
    [g,h^e] &= [g,h]\cdot[g,h]^h\cdots[g,h]^{h^{e-1}}\in[g,h]^e[g,h,h]^H;\\
    \text{and }[g,h,h] &=[g,h,v_1^e]\in[g,h,v_1]^{e H}[g,h,(c)v]^H\text{ by induction}\\
            &\le[g,h]^{e H}[g,(c+1)v]^H
  \end{align*}
  since $[g,h,(c)v]\in[g,(c+1)v]^H$. For~\eqref{eq:groupps:2}, note
  \begin{align*}
    [g,h]^e &\in [g,h^e]\cdot[g,h,h]^H\text{ as before};\\
    \text{and }[g,h,h] &= [g,h,v_1^e]\in[g,h,v_1]^{e H}[g,h,(c)v]^H\text{ by~\eqref{eq:groupps:1}}\\
            &\le [g,v_1,h]^{e H}[g,(c+1)v]^H\text{ since $v_1,h$ commute}\\
            &\le [g,v_1,h^e]^H[g,(c+1)v]^H\text{ by induction}\\
            &\le [g,h^e]^H[g,(c+1)v]^H.\qedhere
  \end{align*}
\end{proof}

\noindent Now as in the Lie ring case we have, recalling $n=\deg(y_0)+\dots+\deg(y_s)$,
\begin{align*}
  \iota([R_0,\dots,R_s]_\Sigma)
  &\le ([x_0^G,\dots,x_s^G]_\Sigma)^{e^{n^2}} = [x_0^{e^{n c_0}G},\dots,x_s^{e^{n c_s}G}]_\Sigma\cdot\gamma_n(G)\text{ by~\eqref{eq:groupps:2}}\\
  &= [y_0^G,\dots,y_s^G]_\Sigma\cdot\gamma_n(G) \le [\gamma_{c_0}(G),\dots,\gamma_{c_s}(G)]_\Sigma\cdot\gamma_n(G)\le\gamma_n(G).
\end{align*}

\subsection*{Third claim: \boldmath $\iota(R_0\cap\dots\cap R_s)\le\delta_n(R)$}
Recall that in any group $H$, if $m$ is an integer and $h\in H$ then
\[h^m-1=(h-1+1)^m-1=\sum_{i\ge1}\binom m i(h-1)^i\in m(h-1)+(h-1)^2\Z H.\]
We extend this identity, in the presence of roots, as follows:
\begin{lem}\label{lem:assocps}
  Let $H$ be a group, let $c,e$ be integers, and assume $H$ contains
  elements $h,v$ with $v^{e^c}=h$. Then for all $d\in\N$ we have
  \begin{align}
    h^{e^d}-1 &\in e^d(h-1)\Z H+(v-1)^{c+1}\Z H,\label{eq:assocps:1}\\
    e^d(h-1) &\in (h^{e^d}-1)\Z H+(v-1)^{c+1}\Z H.\label{eq:assocps:2}
  \end{align}
\end{lem}
\begin{proof}
  By applying repeatedly the lemma with $(h^{e^i},v^{e^i})$ for
  $i=0,\dots,d-1$, it is enough to consider $d=1$.
  
  For~\eqref{eq:assocps:1}, we proceed by induction on $c$, noting
  that the case $c=0$ follows from the first line. Setting
  $v_1=v^{e^{c-1}}$,
  \begin{align*}
    h^e-1 &= (h-1+1)^e-1=\sum_{1\le i\le e}\binom e i(h-1)^i\in e(h-1)+(h-1)^2\Z H\\
          &= e(h-1) + (h-1)(v_1^e-1)\Z H\\
          &\le e(h-1) + (h-1)\big(e(v_1-1)+(v-1)^c\Z H\big)\text{ by induction}\\
          &\le e(h-1)\Z H+(v-1)^{c+1}\Z H
  \end{align*}
  since $v-1$ divides $v_1-1$ and $h-1$ and commutes with
  them. For~\eqref{eq:assocps:2}, we also proceed by induction on $c$:
  \begin{align*}
    e(h-1) &= (h^e-1) - \sum_{2\le i\le e}\binom e i(h-1)^i\in(h^e-1)+(h-1)^2\Z H\\
           &= (h^e-1) + (h-1)(v_1^e-1)\Z H\\
           &\le (h^e-1) + (h-1)\big(e(v_1-1)+(v-1)^c\big)\Z H\text{ by~\eqref{eq:assocps:1}}\\
           &\le (h^e-1) + e(h-1)(v_1-1)+(v-1)^{c+1}\Z H\\
           &\le (h^e-1)\Z H+(v-1)^{c+1}\Z H\text{ by induction.}\qedhere
  \end{align*}
\end{proof}

We are now ready to prove
$\iota(R_0\cap\cdots\cap R_s)\le\delta_n(G)$. We have
\begin{align*}
  \iota(R_0\cap\cdots\cap R_s)-1
  &= \iota(F_s\cap(1+(\rr_0,\dots,\rr_s)_\Sigma))-1\text{ by Proposition~\ref{prop:groupassoc}}\\
  &\le(1+(\langle x_0-1\rangle^{\Z G},\dots,\langle x_s-1\rangle^{\Z G})_\Sigma)^{e^{n^2}}-1\\
  &\le e^{n^2}(\langle x_0-1\rangle^{\Z G},\dots,\langle x_s-1\rangle^{\Z G})_\Sigma+\varpi^n\text{ by~\eqref{eq:assocps:1}}\\
  &= (\langle e^{n c_0}(x_0-1)\rangle^{\Z G},\dots,\langle e^{n c_s}(x_s-1)\rangle^{\Z G})_\Sigma+\varpi^n\\
  &= (\langle x_0^{e^{n c_0}}-1\rangle^{\Z G},\dots,\langle x_s^{e^{n c_s}}-1)\rangle^{\Z G})_\Sigma+\varpi^n\text{ by~\eqref{eq:assocps:2}}\\
  &= (\langle y_0-1\rangle^{\Z G},\dots,\langle y_s-1\rangle^{\Z G})_\Sigma+\varpi^n\\
  &\le (\varpi^{c_0},\dots,\varpi^{c_s})_\Sigma+\varpi^n=\varpi^n.
\end{align*}

It follows that $\iota$ induces a map
$\overline\iota\colon(R_0\cap\cdots\cap
R_s)/[R_0,\dots,R_s]_\Sigma\to\delta_n(G)/\gamma_n(G)$, which is a
homomorphism since its domain (and range) are abelian.

\subsection*{Fourth claim: \boldmath $\overline\iota$ is injective}
The argument is essentially the same as in the Lie algebra case, so we
only indicate the differences. We are to prove
\[\iota^{-1}(\gamma_n(G)\cap R_0\cap\cdots\cap R_s)\le[R_0,\dots,R_s]_\Sigma\cdot F_s^e\cdot\gamma_k(F_s).\]
We again start with $w\in R_0\cap\cdots\cap R_s$ and assume
$\iota(w)\in\gamma_n(G)$, and write it as $w=w_0\cdot w_1\cdot w_2$
with $w_0\in[R_0,\dots,R_s]_\Sigma$ and $w_1\in F_s^e$ and
$w_2\in\gamma_k(F_s)$. Again we write $\iota(w)$ as $\tilde v$ in the
free group with generators $x_0,\dots,x_s,y_0,\dots,y_s$, and let $v$
be the corresponding x-form of $\tilde v$. These elements are written
as left-normed commutators, more precisely as left-normed commutators
of generators if their length is $<k$, and as arbitrary commutators of
length $\ge k$. As before, those commutators of length $\ge k$ are
gathered in $w_0$. For each of the remaining ones in $\tilde v$, again
the number of generators $y_i$ in them is denoted by $n_i$, and the
number of other generators is denoted by $n_\infty$;
and~\eqref{eq:degreeineq} still holds.

If all $n_i\ge1$, then we get a term in $w_0$, while if some $n_i=0$
then~\eqref{eq:degreeineq} is strict, and we consider the equation coming from the exponents. Now the other generators are either $x_i$ or their roots $r_{x_i}$; let us write $n_\infty=n_\infty'+n_\infty''$ with $n_\infty'$ the number of generators in $\{x_0,\dots,x_s\}$. Then, in converting a term into its x-form, the exponent gets multiplied by
\[e^{n_0(n c_0)+\dots+n_s(n c_s)-n_\infty'' n)}=e^{n(n_0 c_0+\dots+n_s c_s-n_\infty'')}>e^{n^2},
\]
since all $c_i$ are divisible by $k$ and $n_\infty''<k$. Therefore we
again get a term in $w_1$.

\section{Proof of Theorem~\ref{thm:groupmain2}}\label{ss:groupproof2}
We begin by an analogue of Proposition~\ref{prop:groupassoc}. Dropping the ``overlines'' from our notation, consider the group
\[F_s=\langle x_{0,1},x_{0,2},\dots,x_{s,1},x_{s,2}\mid x_{0,1}\cdots
  x_{s,1}=x_{0,2}\cdots x_{s,2}=1=\rangle
\]
and its normal subgroups $R_i=\langle x_{i,1},x_{i,2}\rangle^{F_s}$
for $i=0,\dots,s$. Consider also the ideal $\rr_i=(R_i-1)\Z F_s$.

\begin{prop}\label{prop:groupassoc2}
  For $s\geq 2$ we have
  \[\textsf{torsion}\big(\pi_{s+1}(S^2\vee S^2)\big)\cong\frac{F_s\cap(1+(\rr_0,\dots, \rr_n)_\Sigma)}{[R_0,\dots,R_s]_\Sigma}.\]
\end{prop}
\begin{proof}
  Following~\cite{MPW} the quotient
  $\frac{\rr_0\cap\dots\cap \rr_s}{(\rr_0,\dots, \rr_s)_\Sigma}$ can
  be viewed as the $s$th homotopy group of the simplicial abelian
  group $\Z[F[S^1\vee S^1]]$, and the map $F_s\to \Z[F_s]$ given by $f\mapsto f-1$
  induces the following commutative diagram
  \[\begin{tikzcd}
      \displaystyle\frac{R_0\cap\dots\cap R_s}{[R_0,\dots,R_s]_\Sigma} \ar[d,equal]\ar[r] & \displaystyle\frac{\rr_0\cap\dots\cap \rr_s}{(\rr_0,\dots,\rr_s)_\Sigma} \ar[d,equal]\\
      \makebox[0mm][r]{$\pi_{s+1}(S^2\vee S^2)={}$}\pi_s\big(\Omega(S^2\vee S^2)\big)\ar[r] & H_s\big(\Omega(S^2\vee S^2)\big).
    \end{tikzcd}
  \]
  The lower map is the $s$th Hurewicz homomorphism for the loop space
  $\Omega(S^2\vee S^2)$. By~\cite{Bott-Samelson:1953}, the lower right
  term is the degree-$s$ part of the free associative algebra on two
  generators (the homology of $S^1\vee S^1$). The homotopy group
  $\pi_{s+1}(S^2\vee S^2)$ is the sum of its torsion and torsion-free
  part, and the torsion-free part is the degree-$s$ part of the free
  Lie algebra on two generators (also the homology of $S^1\vee
  S^1$). The lower map, on the torsion-free part, is the natural
  inclusion of the free Lie algebra into the free associative algebra,
  so the kernel of the lower map coincides with the torsion subgroup
  of $\pi_{s+1}(S^2\vee S^2)$.
\end{proof}

Combining Serre's finiteness theorem and Hilton's
theorem~\cite{Hilton:1955}, the torsion of $\pi_{s+1}(S^2\vee S^2)$ is
finite, and in particular has bounded exponent $e$. We may also apply
Curtis's theorem: there is an integer $k$ such that
$\gamma_k(F_s)\cap R_0\cap\cdots\cap R_s=1$. The proof of
Theorem~\ref{thm:groupmain2} then proceeds exactly as that of
Theorem~\ref{thm:groupmain}, with Proposition~\ref{prop:groupassoc2}
used as a replacement of Proposition~\ref{prop:groupassoc}.

\section{Proof of Theorem~\ref{thm:main}}\label{ss:mainproof}
Let $H$ be an abelian group of bounded exponent. We begin by
recalling Pr\"ufer's ``first'' theorem~\cite{Prufer:1923}: every
abelian group of bounded exponent is a direct product of cyclic
groups. Now clearly
\[\delta_n\big(\prod_\alpha G_\alpha\big)=\prod_\alpha\delta_n(G_\alpha),\quad
  \gamma_n\big(\prod_\alpha G_\alpha\big)=\prod_\alpha\gamma_n(G_\alpha),
\]
so it suffices to prove Theorem~\ref{thm:main} for cyclic $H$.

We recall next Hilton's theorem~\cite{Hilton:1955}:
\[\pi_{s+1}(S^2\vee S^2)=\bigoplus_d\pi_{s+1}(S^d)\otimes\Lie_d(\Z^2).\]
Therefore in particular every $\pi_{s+1}(S^d)$ is a direct summand of
$\pi_{s+1}(S^2\vee S^2)$.

We finally recall Gray's theorem~\cite{Gray:1969}, proving that the
exponent bound of Cohen-Moore-Neisendorfer is optimal: arbitrary
cyclic groups appear as subgroups of some $\pi_{s+1}(S^d)$.

It follows that for every cyclic group $H$ there is an integer $s$
such that $\pi_{s+1}(S^2\vee S^2)$ contains a copy of $H$. We then
conclude by Theorem~\ref{thm:groupmain2}.

\section{The Serre element in $\pi_{2p}(S^2)$}
Let $p$ be a prime. In this subsection we describe explicitly a copy
of $\Z/p$ in $\pi_{2p}(S^2)$ due to Serre~\cite{Serre}, by computing
its (pre)image $\alpha_p$ in the $E^1$-term of the lower central
spectral sequence associated to $F[S^1]$. There is a single
$(\Z/p)$-term in dimension $2p-1$ of the spectral sequence
\[
\textsf{p-torsion}\left(\frac{I_0\cap\dots\cap I_{2p-1}}{[I_0,\dots, I_{2p-1}]_\Sigma}\right)=\lder_{2p-1}\Lie^{2p}(\Z,1)=\Z/p,
\]
and $\alpha_p$ will be a generator of this subgroup.

\begin{thm}\label{thm:pins2}
  Let $x_i$ for $i=0,\dots,2p-2$ be free generators of a free Lie
  algebra, and consider the following element
  \begin{multline*}
    \alpha_p=\sum_{\kern-1cm\substack{\rho\in\Sigma_{2p-2}\text{ a $2^{p-1}$-shuffle}\\\rho(1)<\rho(3)<\dots<\rho(2p-5)}\kern-1cm}(-1)^\rho[[x_{\rho(0)},x_{2p-2}],[x_{\rho(1)},x_{2p-2}],[x_{\rho(2)},x_{\rho(3)}],\dots, [x_{\rho(2p-4)},x_{\rho(2p-3)}]];
  \end{multline*}
  the sum is taken over all permutations
  $(\rho(0),\dots,\rho(2p-3))\in\Sigma_{2p-2}$ satisfying
  $\rho(0)<\rho(1),\dots,\rho(2p-4)<\rho(2p-3)$ as well as
  $\rho(1)<\rho(3)<\dots<\rho(2p-5)$. Then $\alpha_p$ represents a
  generator of the $p$-torsion in $\lder_{2p-1}\Lie^{2p}(\Z,1)$.
\end{thm}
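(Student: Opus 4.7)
The plan is to prove Theorem~\ref{thm:pins2} in three steps: (i) realize $\lder_{2p-1}\Lie^{2p}(\Z,1)$ via an explicit simplicial model in which $\alpha_p$ is a natural expression, (ii) check that $\alpha_p$ is a cycle in that model, and (iii) detect its mod-$p$ non-triviality. By the discussion preceding the theorem, there is a single $\Z/p$-summand of $p$-torsion in $\lder_{2p-1}\Lie^{2p}(\Z,1)$, so it suffices to exhibit any cycle whose class has order exactly $p$.

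For the simplicial model, I would take $K P_\bullet[1]$ with $P_\bullet\to\Z$ the free resolution coming from $\Z[S^1]/\Z[*]$; in degree $n$ its Moore normalization is free abelian, and after applying $\Lie^{2p}$ the degree-$(2p-1)$ piece is spanned by Lie brackets of weight $2p$ in $2p-1$ ``coordinates'' attached to the non-degenerate simplices of $S^1$ in that degree. These coordinates are the $x_0,\dots,x_{2p-2}$ appearing in the statement, and the face maps $d_i$ act either by identifying two consecutive $x_j$'s or by killing one of them. In this model $\alpha_p$ is a well-defined element, and the first task is to prove $\sum_i(-1)^i d_i\alpha_p=0$: the shuffle condition together with the extra monotonicity $\rho(1)<\rho(3)<\cdots<\rho(2p-5)$ is arranged precisely so that the boundary contributions cancel in pairs, via a shuffle-type combinatorial identity of Jacobi flavour.

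To show that the resulting homotopy class is not zero, I would pair $\alpha_p$ against a detecting functional. A natural candidate is the coefficient in the $\lambda$-algebra $E^1$-page corresponding to the Serre class in $\pi_{2p}(S^3)_{(p)}$: one expands $\alpha_p$ in a PBW-style Lyndon basis of the free Lie algebra on $x_0,\dots,x_{2p-2}$ and checks that the coefficient of a distinguished basis monomial has $p$-adic valuation exactly $1$. Equivalently one may use the Cohen--Moore--Neisendorfer decomposition of $\Omega S^{2p+1}_{(p)}$, which identifies Serre's generator with a $p$-fold Samelson product whose Lie-theoretic expansion agrees with $\alpha_p$ modulo higher filtration, and then feed this back through the Lie analogue of Wu's formula already established.

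The main obstacle is this last identification. The shuffle cycle check of step (ii) is combinatorially involved but essentially routine, guided by the symmetries built into the definition of $\alpha_p$. Proving non-triviality requires a genuinely global input: either an induction over $p$ via the James filtration on $\Omega S^2$, a comparison with a known explicit representative in a chain model for $\Omega S^2$, or a direct verification by computer algebra (the latter feasible and, per the introduction, already carried out by the authors for $p=2$ and $p=3$).
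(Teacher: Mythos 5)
Your framing (find a simplicial model, check $\alpha_p$ is a cycle, then detect that its class has order exactly $p$, using the fact that the $p$-torsion of $\lder_{2p-1}\Lie^{2p}(\Z,1)$ is a single $\Z/p$) is reasonable, but as written neither substantive step is actually carried out, and the detection step in particular is a genuine gap. For step (ii) you only assert that the shuffle and monotonicity conditions make the boundary terms ``cancel in pairs''; in $\Lie^{2p}K(\Z,1)$ the faces act inside iterated brackets and this cancellation is not a routine Jacobi-type identity --- it needs a proof. For step (iii) you list candidate mechanisms but execute none: a unit coefficient on a Lyndon/PBW basis monomial of the chain-level representative proves nothing about the homotopy class unless you exhibit a functional vanishing on boundaries (and on $p$ times the cycle group), which you never construct; the appeal to the Cohen--Moore--Neisendorfer/Samelson-product picture, ``whose Lie-theoretic expansion agrees with $\alpha_p$ modulo higher filtration,'' is circular, since that identification of Serre's class with this explicit Lie expression is precisely what the theorem asserts; computer verification covers only $p=2,3$; and the James-filtration induction is not even sketched. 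So the claim that $\alpha_p$ generates the $p$-torsion remains unproved in your proposal, and you acknowledge as much.

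The paper's proof avoids any after-the-fact detection by building $\alpha_p$ so that generation is automatic. Using $\lder_i\Lie^{2p}(\Z,1)\cong\lder_i\Lie^p(\Z,2)$, one works with $\Lie^p$ on $K(\Z,2)$ and with Schlesinger's short exact sequence $0\to J^p(A)\to S^{p-1}(A)\otimes A\to S^p(A)\to 0$ for the metabelianization $J^p$ of $\Lie^p$. Dold--Puppe's computation identifies the induced long exact sequence of derived functors as $\Z\xrightarrow{p}\Z\twoheadrightarrow\Z/p$, and Schlesinger gives $\lder_{2p-1}\Lie^p(\Z,2)\xrightarrow{\simeq}\lder_{2p-1}J^p(\Z,2)$. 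One then writes the Eilenberg--Zilber shuffle generator of $\lder_{2p}S^p(\Z,2)\cong\Z$ explicitly, lifts it to $S^{p-1}K(\Z,2)\otimes K(\Z,2)$, and applies the connecting homomorphism: all faces vanish except $d_{2p-2}$, whose value, rewritten through the embedding $J^p\hookrightarrow S^{p-1}\otimes\mathrm{id}$ and transported along the homotopy equivalence $K(\Z,2)\to\Lie^2K(\Z,1)$, is exactly $\alpha_p$ up to sign and renumbering. Because $\alpha_p$ arises as the image of a generator under a connecting map with cokernel $\Z/p$, both the cycle property and the order-$p$ generation come for free. If you want to complete your plan, the fix is exactly this change of strategy: construct $\alpha_p$ as a connecting-homomorphism image of the shuffle generator rather than trying to certify a hand-written cycle afterwards.
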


\newcommand\pair[2]{({#1}\,{#2})}

\begin{proof}
  Consider the free abelian simplicial group $K(\Z,2)$: it has a
  single generator $\sigma$ in degree $2$, and its other generators
  may be chosen to be all iterated degeneracies of $\sigma$. We will
  use the dual notation for generators: for $k>2$ the free abelian
  group $K(\Z,2)_k$ is generated by ordered sequences of two elements
  \[\pair{i_1}{i_2}\coloneqq s_{k-1}\cdots\widehat{s_{i_2}}\cdots\widehat{s_{i_1}}\cdots s_0(\sigma)
  \]
  with $0\le i_1<i_2<k$. For example, $K(\Z,2)_5$ has generators
  \begin{alignat*}{4}
    \pair01&\coloneqq s_4s_3s_2(\sigma), &\; \pair02&\coloneqq s_4s_3s_1(\sigma), &
    \pair03&\coloneqq s_4s_2s_1(\sigma), &\; \pair04&\coloneqq s_3s_2s_1(\sigma),\\
    \pair12&\coloneqq s_4s_3s_0(\sigma), &\; \pair13&\coloneqq s_4s_2s_0(\sigma), &
    \pair14&\coloneqq s_3s_2s_0(\sigma), &\; \pair23&\coloneqq s_4s_1s_0(\sigma),\\
    & & \pair24&\coloneqq s_3s_1s_0(\sigma), &\; \pair34&\coloneqq s_2s_1s_0(\sigma).
  \end{alignat*}

  For $n\ge1$, define the functor $J^n$ as the
  \emph{metabelianization} of the $n$th Lie functor $\Lie^n$. For a
  group $A$, there is a natural epimorphism
  \[
    \Lie^p(A)\twoheadrightarrow J^p(A)
  \]
  with kernel generated by Lie brackets of the form
  $[[*,*],[*,*]]$. The elements of $J^p$ can also be written as linear
  combinations of Lie brackets, namely as elements of the Lie functor
  $\Lie^p$, but there is additional rule which holds in $J^p$
  but not hold in $\Lie^p$ in general:
  \[
    [a_1,a_2,\dots, a_p]=[a_1,a_2,a_{\rho(3)},\dots, a_{\rho(p)}]
  \]
  for arbitrary $a_i$ and permutation $(\rho(3),\dots, \rho(p))$
  of $\{3,\dots, p\}$. For $p=3$, the functors $\Lie^3$ and
  $J^3$ are equal.

  For $n\ge1$, denote by $S^n$ the $n$th symmetric power functor
  \[
    S^n\colon {\sf Abelian\ groups}\to {\sf Abelian\ groups}.
  \]
  For a free abelian group $A$, there is a natural short exact
  sequence~\cite{Schlesinger}*{Proposition 3.2}
  \begin{equation}\label{eq:JtoS}
    0\to J^n(A) \to S^{n-1}(A)\otimes A\to S^n(A)\to 0,
  \end{equation}
  where the left-hand map is given by
  \begin{equation}\label{bra2tens}
    [b_1,\dots, b_n]\mapsto b_2b_3\cdots b_n\otimes b_1-b_1b_3\cdots b_n\otimes b_2\text{ for }b_i\in A.
  \end{equation}

  Applying the functors
  $J^p\hookrightarrow S^{p-1}\otimes id\twoheadrightarrow S^p$ to the
  simplicial abelian group $K(\Z,2n)$, and taking the homotopy groups,
  we get the long exact sequence
  \begin{align*}
    &\pi_{2p n}\big(S^{p-1}K(\Z,2n)\otimes K(\Z,2n)\big)\to \lder_{2p n}S^p(\Z,2n)\to \lder_{2p n-1}J^p(\Z,2n)\to\\
    \to&\pi_{2p n-1}\big(S^{p-1}K(\Z,2n)\otimes K(\Z,2n)\big).
  \end{align*}
  It follows from \cite{DP}*{page~307} that the above sequence has the
  following form:
  \[\begin{tikzcd}
      \pi_{2p n}\big(S^{p-1}K(\Z,2n)\otimes K(\Z,2n)\big)\ar[r] \ar[d,equal] & \lder_{2p n}S^p(\Z,2n)\ar[d,equal]\ar[r] & \lder_{2p n-1}J^p(\Z,2n)\ar[d,equal]\\
      \Z\ar[r,"p"] & \Z\ar[->>,r] & \Z/p.
    \end{tikzcd}
  \]
  By~\cite{Schlesinger}*{Proposition~4.7}, the natural epimorphism
  $\Lie^p\twoheadrightarrow J^p$ gives a natural isomorphism
  of derived functors
  \[
    \lder_{2p n-1}\Lie^p(\Z,2n)\xrightarrow{\simeq} \lder_{2p n-1}J^p(\Z,2n)\simeq \Z/p.
  \]
  Let us first find a simplicial generator of $\lder_{2p}S^p(\Z,2)$. For
  this, we observe that the inclusion of the symmetric power into the
  tensor power $S^p\hookrightarrow {\otimes}^p$ induces an isomorphism of
  derived functors
  \[
    \lder_{2p}S^p(\Z,2)\to \lder_{2p}{\otimes}^p(\Z,2).
  \]
  A simplicial generator of $\lder_{2p}{\otimes}^p(\Z,2)$ can be given by
  the Eilenberg-Zilber shuffle-product theorem. Using interchangeably
  the notation $\rho(i)$ and $\rho_i$, this is the element
  \[
    \sum_{\rho\in\Sigma_{2p}\text{ a $2^p$-shuffle}}(-1)^\rho\pair{\rho_{0}}{\rho_{1}}\otimes \pair{\rho_{2}}{\rho_{3}}\otimes\dots\otimes\pair{\rho_{2p-2}}{\rho_{2p-1}}.
  \]
  It follows immediately from the definition of $2^p$-shuffles that
  the symmetric group $\Sigma_p$, acting by permutation on blocks
  $\{2i,2i+1\}$ of size $2$, acts on $2^p$-shuffles.  A generator of
  $\lder_{2p}S^p(\Z,2)$ can be chosen by keeping only a single element
  per $\Sigma_p$-orbit, and replacing tensor products by symmetric
  products:
  \[
\beta\coloneqq\sum_{\substack{\rho\in\Sigma_{2p}\text{ a $2^p$-shuffle}\\\rho(1)<\rho(3)<\dots<\rho(2p-1)}}(-1)^\rho\pair{\rho_{0}}{\rho_{1}}\cdot\pair{\rho_{2}}{\rho_{3}}\cdots\pair{\rho_{2p-2}}{\rho_{2p-1}}.
  \]
  The conditions imply $\rho(2p-1)=2p-1$. For example, for $p=3$ we
  get the element
  \begin{align*}
    &\pair01\pair23\pair45-\pair01\pair24\pair35+\pair01\pair34\pair25-\pair02\pair34\pair15-\pair02\pair13\pair45\\
    {}+{}&\pair03\pair24\pair15-\pair03\pair14\pair25+\pair12\pair03\pair45-\pair23\pair04\pair15-\pair12\pair04\pair35\\
    {}+{}&\pair12\pair34\pair05-\pair13\pair24\pair05+\pair23\pair14\pair05+\pair02\pair14\pair35+\pair13\pair04\pair25.
  \end{align*}
  Now we lift the element from $S^p K(\Z,2)_{2p}$ to
  $(S^{p-1}K(\Z,2)\otimes K(\Z,2))_{2p}$ in a standard way:
  \[
    \tilde\beta\coloneqq\sum_{\substack{\rho\in\Sigma_{2p}\text{ a $2^p$-shuffle}\\\rho(1)<\rho(3)<\dots<\rho(2p-1)}}(-1)^\rho\pair{\rho_{0}}{\rho_{1}}\cdots\pair{\rho_{2p-4}}{\rho_{2p-3}}\otimes\pair{\rho_{2p-2}}{\rho_{2p-1}}.
  \]

  Observe that we have
  \[d_j\pair{i_1}{i_2} = \begin{cases}
      \pair{i_1}{i_2} & \text{ if }i_2< j,\\
      \pair{i_1}{i_2-1} & \text{ if }i_1< j\le i_2,\\
      \pair{i_1-1}{i_2-1} & \text{ if }j\le i_1
    \end{cases}
  \]
  with the understanding that $\pair ii=0$, that we use the same
  notation $\pair{i_1}{i_2}$ for elements of varying degree, and that
  $d_0\pair{0}{i_2}=0$ and $d_j\pair{i_1}{i_2}=0$ if
  $\deg\pair{i_1}{i_2}=j=i_2-1$. Thus e.g.\ $d_0\pair04=d_5\pair04=0$
  and $d_1\pair04=d_2\pair04=d_3\pair04=d_4\pair04=\pair03$ while
  $d_0\pair23=d_1\pair23=d_2\pair23=\pair12$ and $d_3\pair23=0$ and
  $d_4\pair23=d_5\pair23=\pair23$.

  Clearly $d_0(\tilde\beta)=d_{2p-1}(\tilde\beta)=0$. If $j<2p-2$,
  then we express $\tilde\beta$ as a sum over all possible values of
  $r\coloneqq\rho(2p-2)$ (remembering $\rho(2p-1)=2p-1)$ and obtain
  \[d_j(\tilde\beta)=\sum_{r=0}^{2p-2}(-1)^r\big(d_j(\cdots)\otimes\pair{r}{2p-1}+(\cdots)\otimes d_j\pair{r}{2p-1}\big).
  \]
  Now the sum in $(\cdots)$ is a symmetric
  product similar to $\beta$, but with $p-1$ instead of $p$ factors,
  so $(\cdots)$ is exact. The second terms telescope, so we get
  $d_j(\tilde\beta)=0$ when $j<2p-2$. However, $\tilde\beta$ is not a
  cycle in $S^{p-1}(\mathbb Z,2)\otimes K(\mathbb Z,2)$, because
  $d_{2p-2}(\tilde\beta)$ is not zero: we compute
  \[
    d_{2p-2}(\tilde\beta)=\sum_{\kern-1cm\substack{\rho\in\Sigma_{2p}\text{ a $2^p$-shuffle}\\\rho(1)<\dots<\rho(2p-3)=2p-2>\rho(2p-2)}\kern-1cm}(-1)^\rho\pair{\rho_{0}}{\rho_{1}}\dots\pair{\rho_{2p-4}}{\rho_{2p-3}}\otimes\pair{\rho_{2p-2}}{2p-2}.
  \]
  We use the long exact sequence associated with~\eqref{eq:JtoS} to
  obtain a cycle in $J^p(\Z,2)_{2p-1}$.  The ascending
  $2^p$-shuffles $(\rho(0),\dots,\rho(2p-1))$ appearing in the sum can
  in fact be viewed as $2^{p-1}$-shuffles
  $(\rho(0),\rho(1),\dots,\rho(2p-6),\rho(2p-5),\rho(2p-4),\rho(2p-2))$
  or
  $(\rho(0),\rho(1),\dots,\rho(2p-6),\rho(2p-5),\rho(2p-2),\rho(2p-4))$,
  depending on whether $\rho(2p-2)<\rho(2p-4)$ or not, and in all
  cases completed by the values $(2p-2,2p-1)$. Furthermore, these two
  shuffles come with opposite signs, and can be combined,
  via~\eqref{bra2tens}, into
  \begin{multline}\label{longele}
    \sum_{\kern-1cm\substack{\rho\in\Sigma_{2p-2}\text{ a $2^{p-1}$-shuffle}\\\rho(1)<\cdots<\rho(2p-5)}\kern-1cm}(-1)^\rho[\pair{\rho_{2p-3}}{2p-2},\pair{\rho_{2p-4}}{2p-2},\pair{\rho_{0}}{\rho_{1}},\dots,\pair{\rho_{2p-6}}{\rho_{2p-5}}].
  \end{multline}
  We now consider the simplicial map $K(\Z,2)\to \Lie^2K(\Z,1)$,
  given by $\sigma\mapsto [s_0(\sigma'),s_1(\sigma')],$ where
  $\sigma'$ is the generator of $K(\Z,1)_1$; it is a homotopy
  equivalence of complexes. The abelian group $K(\Z,1)_k$ is
  $k$-dimensional, with generators
  \[x_i\coloneqq s_k\cdots\widehat{s_i}\cdots s_0(\sigma')\] for all
  $0\le i<k$, and we have $\pair{i_1}{i_2}\mapsto[x_{i_1},x_{i_2}]$
  under this homotopy equivalence. Thus $\Lie^*(\Z,2)_{2p-1}$ is a
  free Lie algebra on $2p-1$ generators. There is an induced map
  \[
    \Lie^p K(\Z,2)\to \Lie^p\circ \Lie^2K(\Z,1)\to \Lie^{2p}K(\Z,1)
  \]
  which also is a homotopy equivalence of complexes. The image of the
  element~\eqref{longele} is
  \begin{multline*}
    \sum_{\kern-1cm\substack{\rho\in\Sigma_{2p-2}\text{ a $2^{p-1}$-shuffle}\\\rho(1)<\cdots<\rho(2p-5)}\kern-1cm}(-1)^\rho[[x_{\rho(2p-3)},x_{2p-2}],[x_{\rho(2p-4)},x_{2p-2}],[x_{\rho(0)},x_{\rho(1)}],\dots,[x_{\rho(2p-6)},x_{\rho(2p-5)}]].
  \end{multline*}
  Up to sign and renumbering, this is exactly our element $\alpha_p$.
\end{proof}

Note that we considered, in the beginning of this section, a free Lie
algebra of rank $2p-1$ with $2p$ generators $x_0,\dots,x_{2p-1}$
subject to the relation $\sum x_i=0$. Any choice of $2p-1$ out of
these $2p$ generators yields a free Lie algebra on $2p-1$ generators,
and an expression $\alpha_p$. The point being made is that every such
expression involves one of the generators (here $x_{2p-2}$) twice, and
omits another (here $x_{2p-1}$).

We summarize as follows the properties of the element $\alpha_p$ that
will be useful to us:
\begin{prop}\label{prop:groupalphap}
  For every prime $p$ there is an element $\widetilde\alpha_p$ in the
  free group
  $\langle x_0,\dots,x_{2p-1}\mid x_0\cdots x_{2p-1}=1\rangle$ with
  the properties:
  \begin{itemize}
  \item $\widetilde\alpha_p-1\in(\rr_0,\dots,\rr_{2p-1})_\Sigma$;
  \item $\widetilde\alpha_p\not\in[R_0,\dots,R_{2p-1}]_\Sigma$;
  \item $\widetilde\alpha_p^p\in[R_0,\dots,R_{2p-1}]_\Sigma$.
  \end{itemize}
  Furthermore,
  $\widetilde\alpha_p-1\in([\rr_0,\rr_1],\dots,[\rr_{2p-2},\rr_{2p-1}])_\Sigma$,
  namely in the sum of all $p$-fold associative products of brackets
  of $\rr_i$ in any of the $(2p)!$ orderings.
\end{prop}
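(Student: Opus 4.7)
The plan is to take $\widetilde\alpha_p \in F$ to be a group-theoretic lift of the Lie element $\alpha_p$ of Theorem~\ref{thm:pins2}, representing the Serre $p$-torsion class in $\pi_{2p}(S^2)$ under the Wu identification
\[
\pi_{2p}(S^2) \simeq \frac{R_0 \cap \cdots \cap R_{2p-1}}{[R_0, \ldots, R_{2p-1}]_\Sigma}.
\]
Concretely, I interpret each Lie bracket in $\alpha_p$ as a group commutator and each sign $(-1)^\rho$ as an inversion, producing a naive candidate $\widetilde\alpha_p^{(0)} \in \gamma_{2p}(F)$ that is a product of left-normed iterated group commutators of two-letter brackets $[x_a,x_b]$. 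Each summand uses every $x_i$ for $i \le 2p-2$, so $\widetilde\alpha_p^{(0)} \in R_0\cap\cdots\cap R_{2p-2}$, and it represents $\alpha_p$ under $\gamma_{2p}(F)/\gamma_{2p+1}(F) \simeq \Lie^{2p}(F^{\mathrm{ab}})$. Since Theorem~\ref{thm:pins2} together with the Lie analogue of Wu's formula place $\alpha_p$ in the full intersection $I_0 \cap \cdots \cap I_{2p-1}$, the image of $\widetilde\alpha_p^{(0)}$ in $F/R_{2p-1}$ vanishes modulo $\gamma_{2p+1}$; iterative correction along the lower central series (using residual nilpotence of $F/R_{2p-1}$, which is a free group) produces the desired $\widetilde\alpha_p \in R_0\cap\cdots\cap R_{2p-1}$ representing the Serre class.

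The first three bullets then follow directly. The containment $\widetilde\alpha_p - 1 \in (\rr_0, \ldots, \rr_{2p-1})_\Sigma$ is Proposition~\ref{prop:groupassoc}. The non-triviality $\widetilde\alpha_p \not\in [R_0, \ldots, R_{2p-1}]_\Sigma$ says exactly that $\widetilde\alpha_p$ represents the nonzero Serre class in the Wu quotient, and $\widetilde\alpha_p^p \in [R_0, \ldots, R_{2p-1}]_\Sigma$ records that this class has order exactly $p$ by Serre's calculation~\cite{Serre}.

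For the final strengthening $\widetilde\alpha_p - 1 \in ([\rr_0, \rr_1], \ldots, [\rr_{2p-2}, \rr_{2p-1}])_\Sigma$, the key ring-theoretic identity is
\[
[u,v]-1 = u^{-1}v^{-1}\bigl((u-1)(v-1) - (v-1)(u-1)\bigr),
\]
which places $[u,v] - 1$ inside the two-sided commutator ideal $[\rr_a,\rr_b]$ whenever $u \in R_a$ and $v \in R_b$, since that ideal is closed under multiplication by the units $u^{-1}v^{-1}$. An induction on commutator depth then shows that any iterated group commutator built from two-letter brackets $[x_{a_i},x_{b_i}]$ lies, after subtracting $1$, in the associative product $[\rr_{a_1},\rr_{b_1}]\cdots[\rr_{a_p},\rr_{b_p}]$, and summing over the shuffles $\rho$ with signs yields the symmetric containment for the naive lift $\widetilde\alpha_p^{(0)}$. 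The main obstacle I foresee is ensuring that the lower-central-series correction used to push $\widetilde\alpha_p^{(0)}$ into $R_{2p-1}$ can be chosen to preserve this bracket-of-brackets structure (now involving $x_{2p-1}$); a natural remedy is to symmetrize over the choice of omitted generator, as suggested by the remark following Theorem~\ref{thm:pins2}, so that the final $\widetilde\alpha_p$ is a manifestly symmetric product of commutators of pair-brackets in all $2p$ generators.
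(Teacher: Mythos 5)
Your treatment of the first three bullets is in the spirit of the paper (take a representative of the Serre class under Wu's isomorphism; then Proposition~\ref{prop:groupassoc} gives the first bullet and non-triviality and order $p$ in $\pi_{2p}(S^2)$ give the other two), but your route to that representative has a gap. The ``iterative correction along the lower central series'' of the naive lift $\widetilde\alpha_p^{(0)}$ is not justified: residual nilpotence of $F/R_{2p-1}$ only says that an element lying in \emph{every} $\gamma_k$ is trivial; it does not produce a single finite correction placing your element in $R_{2p-1}$, and a priori the procedure requires infinitely many correction factors. What makes a representative exist is either the Wu isomorphism itself (the Serre class is by definition a coset of $[R_0,\dots,R_{2p-1}]_\Sigma$ inside $R_0\cap\cdots\cap R_{2p-1}$, so no construction is needed), or, if you insist on starting from the explicit $\alpha_p$, the fact that this $E^1$-class is a permanent cycle of the Curtis spectral sequence. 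Moreover, even granting a corrected lift, your third bullet is not automatic: the correction factors change the homotopy class by elements of higher filtration, and since for example $\pi_6(S^2)=\Z/12$ contains $4$-torsion, your element could represent the Serre class plus something whose order is not $p$, in which case $\widetilde\alpha_p^{\,p}\notin[R_0,\dots,R_{2p-1}]_\Sigma$. You need to \emph{choose} the representative of the order-$p$ class, not merely any lift of its $E^1$-symbol.

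The more serious gap is in the ``Furthermore'' claim, where your combinatorial argument cannot work as stated. Each summand of $\alpha_p$ (and hence of your naive lift) is a bracket of pair-brackets in which $x_{2p-2}$ occurs \emph{twice} and $x_{2p-1}$ does not occur at all; so your induction, even if carried through, places $\widetilde\alpha_p^{(0)}-1$ in sums of products of brackets that repeat $\rr_{2p-2}$ and omit $\rr_{2p-1}$ --- these are not terms of $([\rr_0,\rr_1],\dots,[\rr_{2p-2},\rr_{2p-1}])_\Sigma$, which requires each of the $2p$ ideals exactly once. Landing in the symmetric product genuinely uses the relation $x_0\cdots x_{2p-1}=1$, as the explicit identity~\eqref{eq:exple2} for $p=2$ shows, and it is exactly here that the paper invokes homotopy-theoretic input rather than bookkeeping: $\lder_i\Lie^n(\Z,1)=0$ for odd $n$ and $\lder_i\Lie^{2n}(\Z,1)=\lder_i\Lie^n(\Z,2)$, i.e.\ the class comes from the $K(\Z,2)$ (equivalently $\Omega S^3$) level, where an argument of the same type as Proposition~\ref{prop:groupassoc} --- finite homotopy against torsion-free homology --- applies with the bracket ideals playing the role of the $\rr_i$. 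Your proposed remedy of ``symmetrizing over the omitted generator'' does not address this, since the repetition of one generator within each summand persists. A smaller point in the same step: $[\rr_a,\rr_b]$ and its $p$-fold products are additive subgroups, not two-sided ideals, so ``closed under multiplication by the units $u^{-1}v^{-1}$'' also needs an argument (expanding the units contributes further terms with repeated ideals, compounding the main problem).
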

\begin{proof}
  The first claim follows from Proposition~\ref{prop:groupassoc},
  since $\alpha_p$ represents an element of $\pi_{2p}(S^2)$.  The
  second claim holds because this element is non-trivial in
  $\pi_{2p}(S^2)$. The third claim holds because it has order $p$ in
  $\pi_{2p}(S^2)$.  The last claim follows from general facts:
  $\lder_i\Lie^n(\Z,1)=0$ for odd $n$, and
  $\lder_i\Lie^{2n}(\Z,1)=\lder_i\Lie^n(\Z,2)$.
\end{proof}

The same statement holds for Lie algebras; we omit the proof.
\begin{prop}\label{prop:liealphap}
  For every prime $p$ there is an element $\alpha_p$ in the free Lie
  algebra
  $\langle x_0,\dots,x_{2p-1}\mid x_0+\cdots+x_{2p-1}=0\rangle$ with
  the properties:
  \begin{itemize}
  \item $\alpha_p\in(I_0,\dots,I_{2p-1})_\Sigma$;
  \item $\alpha_p\not\in[I_0,\dots,I_{2p-1}]_\Sigma$;
  \item $p\alpha_p\in[I_0,\dots,I_{2p-1}]_\Sigma$.
  \end{itemize}
  Furthermore,
  $\alpha_p\in([R_0,R_1],\dots,[R_{2p-2},R_{2p-1}])_\Sigma$.\qed
\end{prop}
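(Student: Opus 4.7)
The proof mirrors that of Proposition~\ref{prop:groupalphap}, with Proposition~\ref{prop:lieassoc} replacing Proposition~\ref{prop:groupassoc} and the canonical embedding $L\hookrightarrow U(L)$ (valid because $L$ is the free Lie algebra on $2p-1$ generators) replacing $F\hookrightarrow\Z F$. The plan is to define $\alpha_p$ as any lift to $I_0\cap\cdots\cap I_{2p-1}$ of the generator of the $\Z/p$-summand $\lder_{2p-1}\Lie^{2p}(\Z,1)$ produced in Theorem~\ref{thm:pins2}, via the Lie analog of Wu's formula
\[
\frac{I_0\cap\cdots\cap I_{2p-1}}{[I_0,\dots,I_{2p-1}]_\Sigma}\simeq \bigoplus_{i\geq 1}\lder_{2p-1}\Lie^i(\Z,1).
\]

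With $\alpha_p$ so chosen, the three bulleted properties fall out immediately. The second holds because the class of $\alpha_p$ in the displayed quotient is non-trivial by Theorem~\ref{thm:pins2}; the third holds because that class has order exactly $p$; and the first is precisely Proposition~\ref{prop:lieassoc} applied to $\alpha_p$, once the symbol $(I_0,\dots,I_{2p-1})_\Sigma$ is read as $(\ii_0,\dots,\ii_{2p-1})_\Sigma$ inside $U(L)$ under the embedding $L\hookrightarrow U(L)$.

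For the concluding clause---that $\alpha_p$ lies in $([I_0,I_1],\dots,[I_{2p-2},I_{2p-1}])_\Sigma$---I would invoke the same formal input as in the group case: the vanishing $\lder_*\Lie^n(\Z,1)=0$ for odd $n$ together with the isomorphism $\lder_*\Lie^{2n}(\Z,1)\simeq\lder_*\Lie^n(\Z,2)$ forces the generator to factor through $\Lie^p\circ\Lie^2$, so $\alpha_p$ is a $\Z$-combination of $p$-fold iterated brackets of $2$-brackets $[x_i,x_j]$---a fact already transparent in the explicit formula of Theorem~\ref{thm:pins2}. Summing over all $(2p)!$ orderings of the indices then places $\alpha_p$ in the advertised symmetric commutator. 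I do not anticipate a substantive obstacle: the two genuinely non-trivial ingredients, Proposition~\ref{prop:lieassoc} and the Serre class computation of Theorem~\ref{thm:pins2}, are already in hand, and the remainder is a formal translation between the statements of those results.
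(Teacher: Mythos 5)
Your proposal reconstructs exactly the argument the paper itself uses: the paper proves the group analogue (Proposition~\ref{prop:groupalphap}) by combining Wu's formula with non-triviality and order~$p$ of the Serre class, Proposition~\ref{prop:groupassoc} for the symmetric-product membership, and the two derived-functor facts ($\lder_i\Lie^n(\Z,1)=0$ for odd $n$, $\lder_i\Lie^{2n}(\Z,1)=\lder_i\Lie^n(\Z,2)$) for the final clause, and it explicitly omits the Lie proof as identical --- which is precisely what you supply via Proposition~\ref{prop:lieassoc}, the Lie analogue of Wu's formula, and the reading of $(I_0,\dots,I_{2p-1})_\Sigma$ inside $U(L)$. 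The only caveat is that your closing phrase ``summing over all $(2p)!$ orderings'' is looser than the actual content of the last clause (the explicit $\alpha_p$ repeats one index and omits another, so being a bracket of $2$-brackets does not by itself give membership in the symmetric product); that clause really rests on the factorization through $\Lie^p\circ\Lie^2$ together with the same finiteness-versus-torsion-free Hurewicz argument run in the $K(\Z,2)$-model, i.e.\ on the two derived-functor identities you do cite, at the same level of terseness as the paper.
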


\begin{exple}
  Here is an explicit generator of $\pi_4(S^2)=\Z/2$. If we consider
  $p=2$ in Theorem~\ref{thm:pins2}, we have only one $[2]$-shuffle and
  the element $\alpha_2$ is $[[x_0, x_2],[x_1,x_2]]$. Reintroducing
  $x_3=-x_0-x_1-x_2$, we can easily check that
  $\alpha_2\in (I_0,I_1,I_2,I_3)_\Sigma$:
  \begin{align}
    \alpha_2 &\coloneqq [[x_0,x_2],[x_1,x_2]]\notag\\
          &= [x_2,x_3]\cdot[x_0,x_1]+[x_1,x_2]\cdot[x_0,x_3]-[x_0,x_2]\cdot[x_1,x_3].\label{eq:exple2}
  \end{align}
  Applying to it the Dynkin idempotent $u\cdot v\mapsto\frac12[u,v]$
  gives then $2\alpha_2\in[I_0,I_1,I_2,I_3]_\Sigma$.

  It is only slightly harder to write a generator of $\pi_4(S^2)$ in
  the language of groups. We may lift $\alpha_2$ to
  $\widetilde\alpha_2\in F$, the free group
  $\langle x_0,x_1,x_2,x_3\mid x_0\cdots x_3\rangle$, as
  \[\widetilde\alpha_2=[[x_0,x_2],[x_0x_1,x_2]],\]
  since then the Hall-Witt identities give
  $\widetilde\alpha_2=[[x_0,x_2],[x_3^{-1},x_2]^{x_2^{-1}}]=[[x_0,x_2],[x_0,x_2]^{x_1}[x_1,x_2]]=[[x_0,x_2],[x_1,x_2]]\cdot[[x_0,x_2],[[x_0,x_2],x_1^{x_2}]]$
  so $\widetilde\alpha_2\in R_0\cap\cdots\cap R_3$. We have thus
  produced a non-trivial cycle
  $\widetilde\alpha_2\in(R_0\cap R_1\cap R_2\cap
  R_3)/[R_0,R_1,R_2,R_3]_\Sigma$.
\end{exple}

\begin{exple}
  Here is a generator of the $3$-torsion in $\pi_6(S^2)$.  For $p=3$,
  we have six $[2,2]$-shuffles in Theorem~\ref{thm:pins2}:
  \begin{xalignat*}{2}
    (0,1,2,3)&\text{ with sign}=1, & (0,2,1,3)&\text{ with sign}=-1,\\
    (0,3,1,2)&\text{ with sign}=1, & (2,3,0,1)&\text{ with sign}=1,\\
    (1,3,0,2)&\text{ with sign}=-1, & (1,2,0,3)&\text{ with sign}=1.
  \end{xalignat*}
  The element $\alpha_3$ representing $3$-torsion in  $\pi_6(S^2)$ is
  \begin{align*}
    \alpha_3 &\coloneqq [[x_0,x_4],[x_1,x_4],[x_2,x_3]]-[[x_0,x_4],[x_2,x_4],[x_1,x_3]]\\
           & {}+[[x_0,x_4],[x_3,x_4],[x_1,x_2]]+[[x_1,x_4],[x_2,x_4],[x_0,x_3]]\\
           & {}-[[x_1,x_4],[x_3,x_4],[x_0,x_2]]+[[x_2,x_4],[x_3,x_4],[x_0,x_1]].\\
  \end{align*}
  It may be expressed as a sum of $30$ associative products of the
  form $\pm[x_a,x_b]\cdot[x_c,x_d]\cdot[x_e,x_f]$ with
  $\{a,b,c,d,e,f\}=\{0,1,2,3,4,5\}$.
\end{exple}

Again it is possible (but now with considerably more effort) to lift
$\alpha_3$ to a generator of $\pi_6(S^2)$ in terms of free groups. We
return to the notation of simplicial free groups: we consider the free
group $F=\langle z_0,\dots,z_4\rangle$ and normal subgroups
$R_0=\langle z_0\rangle^F$, $R_i=\langle z_{i-1}^{-1}z_i\rangle^F$ for
$i\in\{1,\dots,4\}$ and $R_5=\langle z_4\rangle^F$. In other words, we
set $z_i\coloneqq x_0\cdots x_i$. Here is a lift of $\alpha_3$ to $F$
which defines a simplicial cycle, i.e., which lies in the intersection
$R_0\cap\dots\cap R_5$: it is the product of the following fourteen
elements
\begin{align*}
  \widetilde\alpha_3 &= [[z_0,z_4],[z_2,z_4],[z_1,z_3]^{[z_0,z_4]}]^{-1}\cdot
                     [[z_1,z_4],[z_2,z_4],[z_0,z_3]^{[z_1,z_4]}]\\
                   &{}\cdot  [[z_1,z_4],[z_2,z_3],[z_0,z_4]^{[z_1,z_4]}]^{-1}\cdot
                     [[z_0,z_4],[z_2,z_3],[z_1,z_4]^{[z_0,z_4]}]\\
                   &{}\cdot  [[z_2,z_4],[z_0,z_4],[z_1,z_3]^{[z_2,z_4]}]\cdot
                     [[z_2,z_4],[z_1,z_4],[z_0,z_3]^{[z_2,z_4]}]^{-1}\\
                   &{}\cdot  [[z_2,z_3],[z_1,z_4],[z_0,z_4]^{[z_2,z_3]}]\cdot
                     [[z_2,z_3],[z_0,z_4],[z_1,z_4]^{[z_2,z_3]}]^{-1}\\
                   &{}\cdot  [[z_3,z_4],[z_1,z_4],[z_0,z_2]^{[z_3,z_4]}]\cdot
                     [[z_3,z_4],[z_0,z_4],[z_1,z_2]^{[z_3,z_4]}]^{-1}\\
                   &{}\cdot  [[z_3,z_4],[z_2,z_4],[z_0,z_1]^{[z_3,z_4]}]^{-1}\cdot
                     [[z_1,z_4],[z_3,z_4],[z_0,z_2]^{[z_1,z_4]}]^{-1}\\
                   &{}\cdot  [[z_0,z_4],[z_3,z_4],[z_1,z_2]^{[z_0,z_4]}]\cdot
                     [[z_2,z_4],[z_3,z_4],[z_0,z_1]^{[z_2,z_4]}].
\end{align*}
One can directly check that
$\widetilde\alpha_3$ defines a simplicial cycle and that modulo the
seventh term of the lower central series it represents exactly the
element $\alpha_3$.

\begin{rem}
  We have $\pi_6(S^2)=\Z/3\times\Z/4$, and it is also possible to give
  an explicit generator of the $4$-torsion. In the same notation as
  above, it is
  \begin{align*}
    \widetilde\alpha_4 &= [[[z_3,z_1],[z_3,z_2]],[[z_4,z_0],[z_4,z_2]]]\\
    {}\cdot{}& [[[z_4,z_1],[z_4,z_2]],[[z_3,z_0],[z_3,z_2]]]\\
    {}\cdot{} & [[[[z_4,z_1],[z_4,z_2]],[[z_4,z_0],[z_4,z_2]]],[[z_3,z_2],[z_3,z_1]]]\\
    {}\cdot{} & [[[z_3,z_1],[z_3,z_2]],[[[z_4,z_2],[z_4,z_0]],[[z_4,z_2],[z_4,z_1]]]]\\
    {}\cdot{} & [[[z_3,z_2],[z_3,z_1]],[[z_4,z_2],[z_4,z_0]]]\\
    {}\cdot{} & [[[z_4,z_2],[z_4,z_1]],[[z_3,z_2],[z_3,z_0]]]\\
    {}\cdot{} & [[[z_4,z_2],[z_3,z_0]],[[z_4,z_2],[z_3,z_1]]]\\
    {}\cdot{} & [[[z_4,z_1],[z_3,z_2]],[[z_4,z_2],[z_3,z_0]]]\\
    {}\cdot{} & [[[z_4,z_2],[z_3,z_1]],[[z_4,z_0],[z_3,z_2]]]\\
    {}\cdot{} & [[[z_4,z_0],[z_3,z_2]],[[z_4,z_1],[z_3,z_2]]]\\
    {}\cdot{} & [[[z_4,z_2],[z_3,z_1]],[[z_4,z_3],[z_2,z_0]]]\\
  {}\cdot{} & [[[z_4,z_3],[z_2,z_1]],[[z_4,z_2],[z_3,z_0]]]\\
    {}\cdot{} & [[[z_4,z_3],[z_2,z_0]],[[z_4,z_1],[z_3,z_2]]]\\
    {}\cdot{} & [[[z_4,z_0],[z_3,z_2]],[[z_4,z_3],[z_2,z_1]]]\\
    {}\cdot{} & [[[z_4,z_3],[z_2,z_0]],[[z_4,z_3],[z_2,z_1]]].
  \end{align*}

  Note that $\widetilde\alpha_4^2$ is, up to the symmetric commutator
  $[R_0,\dots,R_5]_\Sigma$, equal to
  \begin{equation}\label{longelement}
    [[[[z_0,z_1],[z_0,z_2]],[[z_0,z_1],[z_0,z_3]]],[[[z_0,z_1],[z_0,z_2]],[[z_0,z_1],[z_0,z_4]]]].
  \end{equation}
  Here is a brief explanation of the origin of $\widetilde\alpha_4$.
  The elements of the $E^1$-page of the spectral sequence can be coded
  by generators of lambda-algebra. Serre elements, which we study,
  correspond to the elements $\lambda_1$. The element
  $\widetilde\alpha_4$ corresponds to $\lambda_2\lambda_1$ of the
  lambda-algebra. The $E_{*,5}^{\infty}$ column of $S^2$ has the
  following non-trivial terms: $E_{8,5}^\infty=\mathbb Z/2$ (generator
  $\lambda_2\lambda_1$), $E_{6,5}^\infty=\mathbb Z/3$ (generator
  $\lambda_1$ for $p=3$), $E_{16,5}^\infty=\mathbb Z/2$ (generator
  $\lambda_1^3$). The 4-torsion in $\pi_6(S^2)$ is glued from two
  terms in $E^\infty$: $\lambda_1^3$ and $\lambda_2\lambda_1$. A
  representative of $\lambda_1^3$ is the bracket~\eqref{longelement},
  see for example~\cite{Ellis-Mikhailov:2010}. More generally, each
  $\lambda_i$ corresponds to an operation on a simplicial group, with
  $\lambda_1$ corresponding (for $p=2$) to a simple bracketing
  $u\mapsto[s_0 u,s_1 u]$. Iterating it three times
  gives~\eqref{longelement}; see~\cite{Mikhailov:2018} for details.

  To show that $\widetilde\alpha_4$ represents the $4$-torsion, we
  observe first that it is a cycle, namely that it lies in
  $R_0\cap \cdots\cap R_5$, and secondly we show that, modulo
  $\gamma_9\gamma_8^2$, it represents the element $\lambda_2\lambda_1$
  of the simplicial Lie algebra, given as a sum
  \begin{align*}
    \omit\rlap{\kern-1cm$[[[z_0,z_3],[z_2,z_4]]+[[z_0,z_4],[z_2,z_3]]+[[z_3,z_4],[z_0,z_2]],$}\\
    & \kern1cm[[z_1,z_3],[z_2,z_4]]+[[z_1,z_4],[z_2,z_3]]+[[z_3,z_4],[z_1,z_2]]]\\
    ={}-{} & [[[z_3,z_1],[z_3,z_2]],[[z_4,z_0],[z_4,z_2]]]\\
    {}-{} & [[[z_4,z_1],[z_4,z_2]],[[z_3,z_0],[z_3,z_2]]]\\
    {}+{} & [[[z_3,z_2],[z_3,z_1]],[[z_4,z_2],[z_4,z_0]]]\\
    {}+{} & [[[z_4,z_2],[z_4,z_1]],[[z_3,z_2],[z_3,z_0]]]\\
    {}+{} & [[[z_4,z_2],[z_3,z_0]],[[z_4,z_2],[z_3,z_1]]]\\
    {}+{} & [[[z_4,z_1],[z_3,z_2]],[[z_4,z_2],[z_3,z_0]]]\\
    {}+{} & [[[z_4,z_2],[z_3,z_1]],[[z_4,z_0],[z_3,z_2]]]\\
    {}+{} & [[[z_4,z_0],[z_3,z_2]],[[z_4,z_1],[z_3,z_2]]]\\
    {}+{} & [[[z_4,z_2],[z_3,z_1]],[[z_4,z_3],[z_2,z_0]]]\\
    {}+{} & [[[z_4,z_3],[z_2,z_1]],[[z_4,z_2],[z_3,z_0]]]\\
    {}+{} & [[[z_4,z_3],[z_2,z_0]],[[z_4,z_1],[z_3,z_2]]]\\
    {}+{} & [[[z_4,z_0],[z_3,z_2]],[[z_4,z_3],[z_2,z_1]]]\\
    {}+{} & [[[z_4,z_3],[z_2,z_0]],[[z_4,z_3],[z_2,z_1]]].
  \end{align*}
\end{rem}

\section{Examples}\label{ss:examples}
The homotopy classes presented above yielded with relatively little
computational effort Lie algebras and groups with $p$-torsion in some
high-degree dimension quotient. Using more computational resources, we
were able to find $p$-torsion in lower degree for $p=2$ and $p=3$.

A general simplification (see Propositions~\ref{prop:groupalphap}
and~\ref{prop:liealphap}) is that we can start by an element
$\alpha_p$ of degree $p$ and not $2p$, by writing generators $x_{i j}$
in place of $[x_i,x_j]$. Indeed all the computations that express
$\alpha_p$ as an symmetrized associative product actually take place
in $\Lie_p\Lie_2(\Z^{2p})\subset\Lie_{2p}(\Z^{2p})$. In fact, this
amounts to working in Milnor's simplicial construction $F[S^2]$, whose
geometric realization is $\Omega S^3$, and in its Lie analog
$L[S^2]$. Observe that, for spheres $S^d$ of dimension $d>3$, as well
as of Moore spaces, there is a description of homotopy groups as
centers of explicitly defined finitely generated
groups~\cite{Mikhailov-Wu:2013}. However, these groups are not as
easily defined as in the case of $S^2$, when we quotient by the
symmetric commutator. This is why we concentrated on $S^2\vee S^2$ in
this article.

\subsection{$p=2$}\label{ss:examples2}
The construction given in the proof of Theorem~\ref{thm:liemain} has
generators $x_0,x_1,x_2$ and $x_3\coloneqq-x_0-x_1-x_2$. The element
$\omega$ belongs to $\delta_{14}(A)\setminus\gamma_{14}(A)$. It is
possible to be a little bit more economical, by keeping the nilpotency
degrees of the $y_i$ more under control: the best we could achieve is
\begin{multline*}
  A=\langle x_0,x_1,x_2,x_3,y_0^{(1)},y_1^{(2)},y_2^{(2)},y_3^{(2)}\mid\\
  x_0+x_1+x_2+x_3=0,\;x_0=2^6y_0,\;2^6x_1=2^5y_1,\;2^5x_2=2^3y_2,\;2^3x_3=y_3\rangle
\end{multline*}
and the element $\omega=[[x_0,x_2],[x_1,x_2]]$. In that Lie algebra,
we have $\omega\in\delta_7(A)\setminus\gamma_7(A)$ and
$2\omega\in\gamma_7(A)$. This can been checked by hand, or computer using
the program \texttt{lienq} by Csaba Schneider~\cite{Schneider:1997},
or its improvement \texttt{anq}~\cite{ANQ}; see Appendix~\ref{ss:data}.

Rewriting $[x_i,x_j]$ as $x_{i j}$ allows more simplifications; we may consider general presentations of the form
\begin{align*}
  \MoveEqLeft[4] A=\langle x_{01},x_{02},x_{03},x_{12},x_{13},x_{23},y_{01}^{(2)},y_{02}^{(2)},y_{03}^{(2)},y_{12}^{(2)},y_{13}^{(2)},y_{23}^{(2)}\mid\\
  &x_{01}+x_{02}+x_{03} = -x_{01}+x_{12}+x_{13} = -x_{02}-x_{12}+x_{23}=0,\\
  &2^{a_{i j}} x_{i j}=2^{b_{i j}} y_{i j}\text{ for all }i,j\rangle.
\end{align*}
Suppose that the element $\omega$ is $2^n[x_{02}x_{12}]$ and we want
to show that it belongs to $\delta_4(A)\setminus\gamma_4(A)$. Using
the associative rewriting
$[x_{02},x_{12}]=x_{23}x_{01}+x_{12}x_{03}-x_{02}x_{13}$
from~\eqref{eq:exple2}, we will have $\omega\in\delta_4(A)$ as soon as
$A$ has relations of the form $2^n x_{23}=2^{a_{23}}y_{23}$ and
$2^{a_{23}}x_{01}=y_{01}$ and similarly for the other generators. The
condition $\omega\not\in\gamma_4(A)$ can be checked by a direct
calculation, e.g.\ using \texttt{anq}.

We may also replace the variables $x_{i j}$ by $2^{b_{i j}}x_{i j}$ for
well-chosen $b_{i j}$. This amounts, essentially, to letting the
$x_{i j}$ have degree less than $1$. For instance, replacing $x_{i j}$
by $2^{i+j}x_{i j}$ in the example above and simplifying somewhat, we
get
\begin{align*}
  \MoveEqLeft[4] A=\langle x_{01},x_{02},x_{12},y_{01}^{(2)},y_{02}^{(2)},y_{03}^{(2)},y_{12}^{(2)},y_{13}^{(2)},y_{23}^{(2)}\mid\\
  &2^2 x_{01}=y_{01},\;2^4 x_{02}=y_{02},\;2^6 x_{12}=y_{12},\\
  &2^6(-x_{01}-2x_{02})=2^6y_{03},\;2^5(x_{01}-4x_{12})=2^4y_{13},\;2^5(x_{02}+2x_{12})=2^2y_{23}\rangle
\end{align*}
with
$\omega\coloneqq
2^5[x_{02},x_{12}]\in\delta_4(A)\setminus\gamma_4(A)$. We have
$\omega\equiv
2^5[x_{01},x_{02}]+2^6[x_{01},x_{12}]+2^7[x_{02},x_{12}]$ modulo
$\gamma_4(A)$.  We may also choose $n\ge4$ and let $y_{0j}$ have
degree $n-2$ for all $j$, obtaining in this manner examples with
$2$-torsion in $\delta_n(A)/\gamma_n(A)$.

It is straightforward to convert the example above into a group: it will be
\begin{align*}
  \MoveEqLeft[4] G=\langle x_{01},x_{02},x_{12},y_{01}^{(2)},y_{02}^{(2)},y_{03}^{(2)},y_{12}^{(2)},y_{13}^{(2)},y_{23}^{(2)}\mid\\
  &x_{01}^4=y_{01},\;x_{02}^{16}=y_{02},\;x_{12}^{64}=y_{12},\\
  &x_{01}^{-64}x_{02}^{-128}=y_{03}^{64},\;x_{01}^{32}x_{12}^{-128}=y_{13}^{16},\;x_{02}^{32}x_{12}^{64}=y_{23}^4\rangle
\end{align*}
and the element $\omega=[e_0,e_1]^{32}[e_0,e_2]^{64}[e_1,e_2]^{128}$
belongs to $\delta_4(G)\setminus\gamma_4(G)$. Increasing the degree of
the $y_{0j}$ leads, for every $n\ge4$, to a group $G$ with $2$-torsion
in $\delta_n(G)/\gamma_n(G)$. This is essentially Rips's original
example~\eqref{eq:rips}, except that his example contains more
relations that make the group finite.

\subsection{$p=3$}
As in the $p=2$ case, we may construct a Lie algebra with
generators $x_{i j}$ as follows:
\begin{align*}
  \MoveEqLeft[4] A=\langle x_{i j},y_{i j}^{(i+j+1)}\text{ for }0\le i<j\le 5,\\
  & x_{01}+x_{02}+x_{03}+x_{04}+x_{05}=0,\\
  & {-}x_{01}+x_{12}+x_{13}+x_{14}+x_{15}=0,\\
  & {-}x_{02}-x_{12}+x_{23}+x_{24}+x_{25}=0,\\
  & {-}x_{03}-x_{13}-x_{23}+x_{34}+x_{35}=0,\\
  & {-}x_{04}-x_{14}-x_{24}-x_{34}+x_{45}=0,\\
  & 3^{i+j}x_{i j}=y_{i j}\text{ for }0\le i<j\le 5\rangle
\end{align*}
and the element $\omega=3^{15}([x_{04},x_{14},x_{23}]-[x_{04},x_{24},x_{13}]+[x_{04},x_{34},x_{12}]+[x_{14},x_{24},x_{03}]-[x_{14},x_{34},x_{02}]+[x_{24},x_{34},x_{01}])$ which belongs to $\delta_{18}(A)\setminus\gamma_{18}(A)$.

Again there is substantial flexibility in this example: the degrees of
the $y_{i j}$ may be adjusted, and the last relations may be changed to
$3^{a_{i j}}x_{i j}=3^{c_{i j}}y_{i j}$ for well-chosen
$a_{i j},c_{i j}$. The variables $x_{i j}$ themselves may be replaced by
$3^{b_{i j}}x_{i j}$ for well-chosen $b_{i j}$. Finally, some extra
linear conditions may be imposed on the variables, such as
$x_{02} = x_{13} = x_{15} = x_{24}=x_{34}=0$. After some
experimentation, we arrived at the following reasonably small
example:
\begin{equation}\label{eq:3lie}
  \begin{split}
    \MoveEqLeft[6] A=\langle e_0,e_1,e_2,e_3,y_i^{(2)}\text{ for }i\in\{0,\dots,3\},y_{i j}^{(3)}\text{ for }0\le i<j\le3\mid\\
    & 3^{2i}e_i=y_i,\;3^{12-i}e_j+3^{12-j}e_i=3^{12-2i-2j}y_{i j}\text{ for }(i,j)\in\{(0,1),(0,2),(1,3),(2,3)\}\rangle
\end{split}
\end{equation}
with $\omega=3^9[e_2,e_1,e_0]$.
\begin{prop}\label{prop:3lie}
  For the Lie ring $A$ defined in~\eqref{eq:3lie} we have
  $\omega\in\delta_7(A)\setminus\gamma_7(A)$ and
  $3\omega\in\gamma_7(A)$.
\end{prop}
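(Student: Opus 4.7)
The overall plan is to verify the three claims separately, mirroring the structure of the proof of Theorem~\ref{thm:liemain} but specialised to the compact four-generator presentation~\eqref{eq:3lie}. The two filtration containments $\omega\in\delta_7(A)$ and $3\omega\in\gamma_7(A)$ reduce to algebraic manipulations using the defining relations, while the non-vanishing $\omega\not\in\gamma_7(A)$ is a finite computation in the nilpotent quotient $A/\gamma_7(A)$.

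For the containment $3\omega=3^{10}[e_2,e_1,e_0]\in\gamma_7(A)$, I would distribute the ten factors of $3$ across the three entries of the bracket and replace each $e_i$ by a higher-degree generator via the relations of~\eqref{eq:3lie}. The relations $3^{2i}e_i=y_i$ alone send $3^6[e_2,e_1,e_0]$ to $[y_2,y_1,y_0]\in\gamma_6(A)$ at cost $3^6$; the remaining $3^4$ is absorbed by applying one of the mixed relations $3^{12-i}e_j+3^{12-j}e_i=3^{12-2i-2j}y_{ij}$ to trade two $e$-entries for a single $y_{ij}$ of degree $3$, boosting the weight by one further unit.

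For $\omega\in\delta_7(A)$, I would pass to the universal enveloping algebra $U(A)$ and invoke Proposition~\ref{prop:liealphap} for $p=3$, which writes the Serre element $\alpha_3$ as a $\Z$-linear combination of associative six-fold products lying in $(\ii_0,\dots,\ii_5)_\Sigma$. I would then exhibit a substitution homomorphism from the free six-generator Lie algebra used to define $\alpha_3$ into $A$ which sends $3^9\alpha_3$ to $\omega$ modulo terms already known to lie in $\delta_7(A)$. Once this substitution is set up, the argument is identical to the Lie-algebra step of the proof of Theorem~\ref{thm:liemain}: each associative factor, after applying the relations of $A$, lies in a sufficiently deep power of $\varpi U(A)$, and the $3^9$ prefactor together with the mixed relations ensures the filtration degrees sum to at least $7$.

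The main obstacle, and the reason this proposition is stated as a separate verification rather than deduced directly from Theorem~\ref{thm:liemain}, is the non-vanishing $\omega\not\in\gamma_7(A)$. The quotient $A/\gamma_7(A)$ is a finitely generated nilpotent Lie ring of class $6$, which admits a consistent polycyclic presentation that can be computed using Csaba Schneider's \texttt{lienq} program, as was done for the $p=2$ examples above. In this explicit polycyclic basis $\omega$ becomes a specific integer vector, and one checks that it is nonzero. This computer verification is essentially unavoidable, since the specific choice of exponents in~\eqref{eq:3lie} was itself obtained by experimentation to minimise the weight $n$ while keeping $\omega$ outside $\gamma_n(A)$.
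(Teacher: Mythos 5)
Your plan for $\omega\in\delta_7(A)$ is where the real content of this proposition lies, and that step is not actually carried out. Saying you would ``exhibit a substitution homomorphism'' sending $3^9\alpha_3$ to $\omega$ ``modulo terms already known to lie in $\delta_7(A)$'' defers exactly the point at issue: the presentation~\eqref{eq:3lie} is not an instance of the generic construction of Theorem~\ref{thm:liemain} (only four generators $e_i$, no relation $e_0+e_1+e_2+e_3=0$, and specially tuned exponents $3^{12-i}$, $3^{12-j}$, $3^{12-2i-2j}$), and whether the image of the symmetric decomposition of $\alpha_3$ lands in $\varpi^7$ depends precisely on those exponents; no substitution is specified and no degree bookkeeping is checked. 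The paper's proof supplies exactly the identity your plan would need: in $U(A)$ one expands $\omega=3^9(e_0e_1e_2-e_0e_2e_1-e_1e_2e_0+e_2e_1e_0)$ and regroups it into eight monomials, each consisting of one binomial $3^{12-i}e_j+3^{12-j}e_i=3^{2k+2\ell}y_{ij}$ flanked by $e_k$ and $e_\ell$ with $\{i,j,k,\ell\}=\{0,1,2,3\}$ (the auxiliary $e_3$-terms cancel), so that every summand becomes a product of $y_k$, $y_\ell$, $y_{ij}$ of weights $2+2+3=7$. Without this explicit decomposition, or a fully specified substitution with the same verification, your argument for $\omega\in\delta_7(A)$ is a plan rather than a proof.

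Your sketch for $3\omega\in\gamma_7(A)$ also fails as stated: after spending $3^6$ on the relations $3^{2i}e_i=y_i$ to reach $[y_2,y_1,y_0]\in\gamma_6(A)$, the remaining factor $3^4$ is too small to invoke any mixed relation (their coefficients on the $e$'s range from $3^9$ to $3^{12}$), and ``trading two $e$-entries for a single $y_{ij}$'' is not an operation available inside a multilinear bracket. A correct hand argument does exist, but it is organized differently: the $(2,3)$-relation gives $3^{10}e_2=3^3y_{23}-3^{11}e_3$, hence $3\omega=3^3[y_{23},e_1,e_0]-3^{11}[e_3,e_1,e_0]$; the $(1,3)$-relation $3^{11}e_3=3^4y_{13}-3^9e_1$ together with $[e_1,e_1]=0$ turns the second term into $3^4[y_{13},e_1,e_0]$, and since $3^2[y_{23},e_1,e_0]=[y_{23},y_1,y_0]$ (and likewise with $y_{13}$) both terms lie in $\gamma_7(A)$. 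For the record, the paper verifies both $3\omega\in\gamma_7(A)$ and $\omega\notin\gamma_7(A)$ by computing nilpotent quotients with \texttt{lienq} and \texttt{LieRing}; your appeal to \texttt{lienq} for the non-membership matches the paper, and that computational step is indeed unavoidable there.
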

\begin{proof}
  Expanding $\omega$ associatively, we get
  \[\omega = 3^9(e_0e_1e_2-e_0e_2e_1-e_1e_2e_0+e_2e_1e_0).\]
  We may rewrite it as
  \begin{align*}
    \omega &= -e_0(3^9e_2+3^{10}e_3)e_1 - e_1(3^9e_2+3^{10}e_3)e_0\\
           & \phantom{=-} + e_0(3^9e_1+3^{11}e_3)e_2 + e_2(3^9e_1+3^{11}e_3)e_0\\
           & \phantom{=-} + (3^{10}e_0+3^{12}e_2)e_3e_1 + e_1e_3(3^{10}e_0+3^{12}e_2)\\
           & \phantom{=-} - (3^{11}e_0+3^{12}e_1)e_3e_2 - e_2e_3(3^{11}e_0+3^{12}e_1).
  \end{align*}
  Each of the summands belongs to $\varpi^7(A)$: they are all products
  of $e_k$, $e_\ell$ and $3^{12-i}e_j+3^{12-j}e_i$ for some
  $\{i,j,k,\ell\}=\{0,1,2,3\}$. The binomial term equals
  $3^{12-2i-2j}y_{i j}=3^{2k+2\ell}y_{i j}$, so the summand is the
  product of $3^{2k}e_k$, $3^{2\ell}e_\ell$ and $y_{i j}$, namely the
  product of $y_k,y_\ell,y_{i j}$, of respective degrees $2,2,3$.

  To check that $\omega$ does not belong to $\gamma_7(A)$ but that
  $3\omega$ does, we compute nilpotent quotients of $A$. We did the
  calculation using two different programs: \texttt{lienq} by Csaba
  Schneider and \texttt{LieRing}~\cite{LieRing} for
  \texttt{GAP}~\cite{GAP4} by Willem de Graaf and Serena Cical\`o.
\end{proof}

In the next subsection, we give a direct proof that the associated
group has $3$-torsion in $\delta_7/\gamma_7$.

\subsection{\boldmath A small, finite $3$-group $G$ with $\delta_7(G)\neq\gamma_7(G)$}
We consider the group $G$ given by the presentation~\eqref{eq:3lie}, namely
\begin{equation}\label{eq:3group}
  \begin{split}
    \MoveEqLeft[10] G=\langle e_0,e_1,e_2,e_3,y_i^{(2)}\text{ for }i\in\{0,\dots,3\},y_{ij}^{(3)}\text{ for }0\le i<j\le 3\mid\\
    & e_i^{3^{2i}}=y_i,\;e_j^{3^{12-i}}e_i^{3^{12-j}}=y_{ij}^{3^{12-2i-2j}}\text{ for }(i,j)\in\{(0,1),(0,2),(1,3),(2,3)\}\rangle
  \end{split}
\end{equation}
with $\omega=[e_2,e_1,e_0]^{3^9}$.

\begin{prop}
  In the group defined by~\eqref{eq:3group} we have
  $\omega\in\delta_7(G)$.
\end{prop}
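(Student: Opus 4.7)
The plan is to adapt, in the group algebra $\mathbb{Z}G$, the associative rewriting used in the proof of Proposition~\ref{prop:3lie}; it suffices to show $\omega - 1 \in \varpi^7$. First observe that $c \coloneqq [e_2, e_1, e_0]$ lies in $\gamma_4(G)$: the relation $e_0 = y_0$ (from $e_0^{3^0} = y_0$ with $y_0$ of formal degree~$2$) forces $e_0 \in \gamma_2(G)$, while $e_1, e_2 \in \gamma_1(G)$, so $[e_2, e_1] \in \gamma_2$ and $[[e_2, e_1], e_0] \in \gamma_4$. Consequently $c - 1 \in \varpi^4$, and the binomial expansion of $c^{3^9} - 1$ yields
\[
\omega - 1 \equiv 3^9(c - 1) \pmod{\varpi^8},
\]
so it suffices to prove $3^9(c - 1) \in \varpi^7$.

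Next I would translate the iterated group commutator into an iterated associative commutator in $\mathbb{Z}G$. Using the exact identity $[g,h] - 1 = g^{-1} h^{-1}\bigl((h-1)(g-1) - (g-1)(h-1)\bigr)$ iteratively, with $g^{-1}h^{-1} = 1 + \rho$ and $\rho \in \varpi^{\min(s,t)}$ when $g \in \delta_s$, $h \in \delta_t$, one obtains
\[
c - 1 = [e_2 - 1,\, e_1 - 1,\, e_0 - 1]_{\mathrm{ass}} + E,
\]
where $[\,\cdot,\cdot,\cdot\,]_{\mathrm{ass}}$ is the iterated associative commutator in $\mathbb{Z}G$ and $E \in \varpi^5$ is an explicit sum of correction products. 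The main term $3^9 [e_2-1, e_1-1, e_0-1]_{\mathrm{ass}}$ expands as a signed sum of four products $\pm 3^9 (e_a - 1)(e_b - 1)(e_c - 1)$. To each I apply the same add-and-subtract grouping used in the Lie proof: the relation $e_j^{3^{12-i}} e_i^{3^{12-j}} = y_{ij}^{3^{12-2i-2j}}$ translates in $\mathbb{Z}G$ to
\[
3^{12-i}(e_j - 1) + 3^{12-j}(e_i - 1) = 3^{12-2i-2j}(y_{ij} - 1) + F_{ij},
\]
with $F_{ij} \in \varpi^2$ an explicit error of binomial and cross terms. Substituting into each of the four products, and then absorbing the remaining factors via $3^{2k}(e_k - 1) \equiv y_k - 1 \pmod{\varpi^2}$ (from $e_k^{3^{2k}} = y_k$), each summand rewrites, modulo $\varpi^7$, as a triple $\pm(y_k - 1)(y_{ij} - 1)(y_\ell - 1)$ with $\{i, j, k, \ell\} = \{0, 1, 2, 3\}$ of respective filtrations $2, 3, 2$, hence in $\varpi^{2+3+2} = \varpi^7$.

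The main obstacle is the bookkeeping of the errors $E$, $F_{ij}$, and the corrections from $3^{2k}(e_k - 1) \equiv y_k - 1$: each replacement in $\mathbb{Z}G$ is only a congruence, whereas the Lie algebra rewriting was an exact identity. The redeeming feature is that every such error carries extra powers of $3$ together with additional $(e_i - 1)$-factors of positive filtration; by re-applying the relations $e_k^{3^{2k}} = y_k$ and $e_j^{3^{12-i}} e_i^{3^{12-j}} = y_{ij}^{3^{12-2i-2j}}$ to these errors, the available $3$-powers can be distributed among the factors to promote them into $y$-variables and push the total filtration up to $\geq 7$ --- exactly as in the Lie algebra calculation, where every summand ended up as a product of $y$-variables of degrees $2, 2, 3$ summing to $7$.
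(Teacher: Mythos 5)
Your overall strategy coincides with the paper's: reduce $\omega-1$ to $3^9([e_2,e_1,e_0]-1)$ using $[e_2,e_1,e_0]\in\gamma_4(G)$, pass from the group commutator to the associative one, and re-run the Lie-algebra rewriting while promoting factors with the available powers of $3$. The gap is that the error control, which is where the entire content of this proposition lies, is asserted rather than proved, and one of your stated estimates is too weak to close the argument. Writing $f_i=e_i-1$, the bound $F_{ij}\in\varpi^2$ is useless by itself: in a summand $f_k F_{ij} f_\ell$ it gives only filtration $1+2+1=4$, far short of $7$, and multiplying by powers of $3$ does not raise filtration unless those powers match the relations. What is actually needed (and what the paper proves) is $3^{12-j}f_i+3^{12-i}f_j\in 3^{12-2i-2j}\varpi^3+\varpi^5$: every correction term either carries the full factor $3^{12-2i-2j}=3^{2k+2\ell}$, which is then spent on the outer factors via $3^{2k}f_k\in\varpi^2$, or already lies in $\varpi^5$. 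Establishing this requires the binomial bookkeeping you defer: for instance $\binom{3^{12-j}}{2}f_i^2\in 3^{12-2i-2j}\varpi^3$ because $3^{12-2i-2j}\cdot 3^{2i}$ divides $\binom{3^{12-j}}{2}$ and $3^{2i}f_i\in\varpi^2$ (itself a small recursive consequence of $e_i^{3^{2i}}=y_i$), and similarly for the cross term $3^{24-i-j}f_if_j$ and the higher $(y_{ij}-1)^k$ terms. These divisibility checks are exactly where the specific exponents of the presentation enter; they cannot be waved through by analogy with the Lie computation, where the rewriting was an exact identity in a graded ring.

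The same issue affects the term $3^9E$: from $E\in\varpi^5$ alone one cannot conclude $3^9E\in\varpi^7$. The correct argument (implicit in the paper's remark that $3^9$ is divisible by the product of the exponents of $e_0,e_1,e_2$ modulo $\gamma_2$) is structural: each such error has the form $3^9(u-1)X$ with $u$ a product of $e_0^{\pm1},e_1^{\pm1},e_2^{\pm1}$ and $X$ a product of $f_0,f_1,f_2$ of filtration at least $4$; since $3^0f_0,\,3^2f_1,\,3^4f_2\in\varpi^2$, already $3^6X\in\varpi^6$, and the factor $u-1\in\varpi$ supplies the seventh degree. Your closing paragraph gestures at this redistribution, and the plan is viable --- it is essentially the paper's proof --- but until these quantitative estimates are carried out, the proof is incomplete.
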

\begin{proof}
  We will use the following well-known identity, which holds for any
  element $x\in G$ and $d\geq 2$:
  \begin{equation}\label{binomials}
    x^d-1=\sum_{k=1}^d\binom{d}{k}(x-1)^k.
  \end{equation}
  We compute modulo $\varpi^7(G)$, and from now on write $\equiv$ to
  mean equivalence modulo $\varpi^7(G)$. We get
  \begin{align*}
    \omega-1 &\equiv 3^9([e_2,e_1,e_0]-1)\text{ since }[e_2,e_1,e_0]\in \gamma_4(G)\\
             &= 3^9[e_1,e_2]e_0^{-1}\big(([e_2,e_1]-1)(e_0-1)-(e_0-1)([e_2,e_1]-1)\big)\\
    \begin{split}
      =3^9[e_1,e_2]e_0^{-1}\big(e_2^{-1}e_1^{-1}((e_2-1)(e_1-1)-(e_1-1)(e_2-1))(e_0-1)\\
      -(e_0-1)e_2^{-1}e_1^{-1}((e_2-1)(e_1-1)-(e_1-1)(e_2-1))\big);
    \end{split}\\
    \intertext{and since $3^9$ is divisible by the product of exponent
    of $e_0,e_1,e_2$ modulo $\gamma_2(G)$,}
    \begin{split}\equiv 3^9\big((e_0-1)(e_1-1)(e_2-1)-(e_0-1)(e_2-1)(e_1-1)\\
      {}-(e_1-1)(e_2-1)(e_0-1)+(e_2-1)(e_1-1)(e_0-1)\big).
    \end{split}
  \end{align*}
  Let us write $f_i\coloneqq e_i-1$ for $i\in\{0,1,2,3\}$. Then, as in
  the proof of Proposition~\ref{prop:3lie}, we have
  \begin{align*}
    \omega-1 &\equiv 3^9(f_0f_1f_2-f_0f_2f_1-f_1f_2f_0+f_2f_1f_0)\\
             &= -f_0(3^9f_2+3^{10}f_3)f_1 - f_1(3^9f_2+3^{10}f_3)f_0\\
           & \phantom{=-} + f_0(3^9f_1+3^{11}f_3)f_2 + f_2(3^9f_1+3^{11}f_3)f_0\\
           & \phantom{=-} + (3^{10}f_0+3^{12}f_2)f_3f_1 + f_1f_3(3^{10}f_0+3^{12}f_2)\\
           & \phantom{=-} - (3^{11}f_0+3^{12}f_1)f_3f_2 - f_2f_3(3^{11}f_0+3^{12}f_1).
  \end{align*}
  Next, using~\eqref{binomials} we have
  \begin{align*}
    e_i^{3^{12-j}}e_j^{3^{12-i}}-1 &= (e_i^{3^{12-j}}-1)+(e_j^{3^{12-i}}-1)+(e_i^{3^{12-j}}-1)(e_j^{3^{12-i}}-1)\\
    &= 3^{12-j}f_i+\binom{3^{12-j}}{2}f_i^2+\binom{3^{12-j}}{3}f_i^3+\cdots\\
    &\phantom{=} + 3^{12-i}f_j+\binom{3^{12-i}}{2}f_j^2+\binom{3^{12-i}}{3}f_j^3+\cdots\\
    &\phantom{=} + 3^{24-i-j}f_i f_j+\cdots\\
    =y_{i j}^{3^{12-2i-2j}}-1 &= 3^{12-2i-2j}(y_{i j}-1)+\binom{3^{12-2i-2j}}{2}(y_{i j}-1)^2+\cdots
  \end{align*}
  Again using~\eqref{binomials}, the relations $e_i^{3^{2i}}=y_i$
  imply $3^{2i}f_i\in\varpi^2$.  Now $3^{12-2i-2j}$ divides
  $\binom{3^{12-j}}{3}/3^{2i}$, so
  $\binom{3^{12-j}}{2}f_i^2\in 3^{12-2i-2j}\varpi^3$. Similarly,
  $3^{12-2i-2j}$ divides $\binom{3^{12-j}}{3}$ and
  $\binom{3^{12-j}}{4}$ so all terms with a binomial co\"efficient
  belong to $3^{12-2i-2j}\varpi^3+\varpi^5$. The same holds for all
  terms in the last two rows. We therefore have
  \[3^{12-j}f_i+3^{12-i}f_j\in 3^{12-2i-2j}\varpi^3+\varpi^5.
  \]
  We note $3^{12-2i-2j}=3^{2k+2\ell}$ whenever
  $\{i,j,k,\ell\}=\{0,1,2,3\}$. Returning to computations modulo
  $\varpi^7$, we consider a typical summand
  $f_k(3^{12-j}f_i+3^{12-i}f_j)f_\ell$ in our decomposition of
  $\omega-1$. We write $3^{12-j}f_i+3^{12-i}f_j=3^{12-2i-2j}u+v$ with
  $u\in\varpi^3,v\in\varpi^5$ to get
  \[f_k(3^{12-j}f_i+3^{12-i}f_j)f_\ell =f_k(3^{2k+2\ell}u+v)f_\ell
    =(3^{2k}f_k)u(3^{2\ell}f_\ell)+f_k v f_\ell
  \]
  where each summand belongs to $\varpi^7$.
\end{proof}

\begin{prop}
  In the group defined by~\eqref{eq:3group}, the element $\omega$
  defined above does not belong to $\gamma_7(G)$, but its cube does.
\end{prop}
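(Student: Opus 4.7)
The plan separates the two claims. I will show $\omega^3 \in \gamma_7(G)$ by a direct commutator calculation modeled on the Lie-ring proof of Proposition~\ref{prop:3lie}, and I will show $\omega \notin \gamma_7(G)$ by passing to the associated graded Lie ring and invoking that same proposition.

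For $\omega^3 = [e_2,e_1,e_0]^{3^{10}} \in \gamma_7(G)$, the strategy is to push the exponent $3^{10}$ inside the commutator using the Hall--Petresco collection formula and the identity $[a^n,b] \equiv [a,b]^n$ modulo deeper terms of the lower central series. The arithmetic of exponents will work out exactly as in the Lie-ring proof: each substitution using the relation $e_i^{3^{2i}}=y_i \in \gamma_2(G)$ gains two steps in the lower central series (since $y_i$ has weight $2$), and each substitution using the binomial-type relation $e_j^{3^{12-i}} e_i^{3^{12-j}} = y_{ij}^{3^{12-2i-2j}} \in \gamma_3(G)$ gains three steps. Mirroring the eight-term decomposition displayed in the proof of Proposition~\ref{prop:3lie}, each summand becomes a product of commutators of total weight $2+2+3 = 7$, so lies in $\gamma_7(G)$.

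For $\omega \notin \gamma_7(G)$, I would work with the associated graded Lie ring $\mathrm{gr}(G/\gamma_7(G)) \coloneqq \bigoplus_{n=1}^{6} \gamma_n(G)/\gamma_{n+1}(G)$. Each defining group relation, read modulo higher $\gamma$-terms, yields a Lie relation of the same form as in the presentation of $A$: the relation $e_i^{3^{2i}} = y_i$ gives $3^{2i}\bar e_i = \bar y_i$ in $\gamma_1/\gamma_2$, and similarly for the binomial-type relations. This produces a natural surjection $A/\gamma_7(A) \twoheadrightarrow \mathrm{gr}(G/\gamma_7(G))$, under which the Lie element $\omega_A = 3^9[e_2,e_1,e_0]$ maps to the leading-order class of $\omega$. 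Since $\omega_A \notin \gamma_7(A)$ by Proposition~\ref{prop:3lie}, its image is nonzero in some $\gamma_k(G)/\gamma_{k+1}(G)$ with $k \le 6$, so $\omega \notin \gamma_7(G)$.

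The delicate step is justifying the surjection $A/\gamma_7(A) \twoheadrightarrow \mathrm{gr}(G/\gamma_7(G))$ carefully, checking that the higher-order expansion of the group relations (e.g.\ the $\binom{3^{2i}}{k}$-terms appearing when one expands $e_i^{3^{2i}}-1$) does not introduce additional relations that would collapse the image of $\omega_A$. A cleaner alternative, in the spirit of the computational verification used for Proposition~\ref{prop:3lie}, is to run a nilpotent-quotient algorithm directly on $G/\gamma_7(G)$ and confirm that the resulting power-commutator presentation exhibits $\omega$ as a nontrivial element of order $3$; this is the same approach used on the Lie side via \texttt{lienq} and \texttt{LieRing}.
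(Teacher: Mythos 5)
Your argument for $\omega\notin\gamma_7(G)$ has a logical gap that cannot be patched as stated: even if one constructs a surjection $A/\gamma_7(A)\twoheadrightarrow\operatorname{gr}(G/\gamma_7(G))$, nontriviality of $\omega_A=3^9[e_2,e_1,e_0]$ in the source says nothing about its image — nonvanishing does not push forward along a surjection, only vanishing does. Moreover the surjection itself is doubtful in the form you describe, because the graded Lie ring of $G$ is cut out by the initial forms of \emph{all} elements of the relation subgroup, not just of the listed relators; the higher-order terms you flag as ``delicate'' are exactly the mechanism by which dimension quotients differ from lower central quotients, i.e.\ the phenomenon the whole paper is about, so they cannot be waved away. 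The paper does not attempt any such transfer from the Lie ring: its proof of this proposition is genuinely computer-assisted. It builds a carefully tailored quotient of $G$ (the $y_i,y_{ij}$ are specialized to explicit commutators in four new generators $z_i$, extra commutation relations are imposed, and $\gamma_6$ is made cyclic and central), feeds it to O'Brien's \texttt{pq} to compute the maximal quotient of $3$-class $17$ — a group of order $3^{3996}$, later trimmed to $3^{494}$ — and checks in \texttt{GAP} that the relations of~\eqref{eq:3group} hold there and that $\omega$ survives. Your fallback (``run a nilpotent-quotient algorithm on $G/\gamma_7(G)$'') is the right idea, but it is not an optional alternative: it \emph{is} the proof, and the paper's reductions were needed to make that computation feasible.

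The first half of your proposal also has a gap. The eight-term decomposition in Proposition~\ref{prop:3lie} is an \emph{associative} identity in the enveloping algebra (respectively the group ring), and it certifies only $\omega-1\in\varpi^7$, i.e.\ membership in $\delta_7$; it has no commutator analogue of the same shape. If your ``mirroring'' of that decomposition by Hall--Petresco manipulations really produced a product of commutators of weight $7$, the identical argument would apply to $\omega$ itself rather than to $\omega^3$, contradicting the other half of the proposition. The extra factor of $3$ is precisely what makes a genuine commutator (as opposed to associative) decomposition possible — this is the content of the third bullet of Propositions~\ref{prop:groupalphap} and~\ref{prop:liealphap} in the general construction — but for this particular small example the paper offers no hand computation of it: $3\omega\in\gamma_7(A)$ is verified by \texttt{lienq}/\texttt{LieRing}, and the group statement via the computed finite quotient. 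So the assertion that ``the arithmetic of exponents will work out exactly as in the Lie-ring proof'' appeals to a hand proof that does not exist, and your sketch does not supply the missing symmetric-commutator mechanism.
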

\begin{proof}
  The proof is computer-assisted. It suffices to exhibit a quotient
  $\overline G$ of $G$ in which the image of $\omega$ does not belong
  to $\gamma_7(\overline G)$ but its cube does, and we shall exhibit a
  finite $3$-group as quotient.

  To make the computations more manageable, we replace the generators
  $y_i$ and $y_{i j}$ by generators $z_0,\dots,z_3$, and impose the
  choices
  \begin{xalignat*}{4}
    y_0 &= [z_0,z_1], & y_1 &= [z_0,z_2], & y_2 &= [z_0,z_3], & y_3 &= [z_1,z_2],\\
    y_{01} &= 1, & y_{02} &= [z_1,z_3,z_3], & y_{13} &= [z_1,z_3,z_1], & y_{23} &= [z_1,z_3,z_0].
  \end{xalignat*}
  
  In this manner, we obtain an $8$-generated group
  $\langle e_0,\dots,e_3,z_0,\dots,z_3\rangle$. We next impose extra
  commutation relations:
  $[e_2,z_2],[e_3,z_2],[e_1,z_3],[e_2,z_3],[e_3,z_3]$.

  We compute a basis of left-normed commutators of length at most $6$
  in that group; notice that $\omega$ may be expressed as
  $[z_3,z_2,z_3,z_1,z_1,e_3]^{3^5}$, and impose extra relations making
  $\gamma_6$ cyclic and central.

  The resulting finite presentation may be fed to the program
  \texttt{pq} by Eamonn O'Brien~\cite{PQ}, to compute the maximal
  quotient of $3$-class $17$. This is a group of order $3^{3996}$, and
  can (barely) be loaded in the computer algebra system
  \texttt{GAP}~\cite{GAP4} so as to check (for safety) that the
  relations of $G$ hold, and that the element $\omega$ has a
  non-trivial image in it.

  Finally, the order of the group may be reduced by iteratively
  quotienting by maximal subgroups of the centre that do not contain
  $\omega$.
\end{proof}

The resulting group, which is the minimal-order $3$-group with
non-trivial dimension quotient that we could obtain, has order
$3^{494}$.

It may be loaded in any \texttt{GAP} distribution by downloading the
ancillary file \verb+3group.gap+ to the current directory and running
\verb+Read("3group.gap");+ in a \texttt{GAP} session.

\section{Acknowledgments}
The authors are grateful to Jacques Darn\'e for having pointed out a
mistake in a previous version of this text, which led us to simplify
some proofs.

\appendix\section{The program \texttt{anq}}\label{ss:data}
We have developed a power computer program to explore nilpotent
quotients of finitely presented Lie rings. It is freely available at
\url{https://github.com/laurentbartholdi/anq}. The first example in~\S\ref{ss:examples2} is entered by putting the following in a file:
\begin{verbatim}
< x0, x1, x2, x3, omega, omega2, y0; y1; y2; y3 | x0+x1+x2+x3,
  x0 = 2^7*y0, 2^1*x1 = y1, 2^2*x2 = y2, 2^4*x3 = y3,
  omega := [[x0,x2],[x1,x2]], omega2 := 2*omega >
\end{verbatim}
Without going into details: generators are listed, separated by commas
(\verb+,+) or semicolons (\verb+;+); the degree of a generator is one
more than the number of preceding semicolons. Relations are then
listed after the \verb+|+. The last two relations are \emph{aliases}:
they define a generator in terms of previously-listed
generators. (Being an alias merely speeds up the program).

\texttt{anq} supports a variety of rings; in particular, finite
rings $\Z/p^n\Z$ in which arithmetic is very fast; fixed-precision
integers (which abort under overflow); and arbitrary-precision
integers (which tend to be quite slow). Since for the example above we
are interested in $2$-torsion, we compile an executable for $p=2$ and
$n$ large:
\begin{verbatim}
% make nq_l_2_64
\end{verbatim}
The ``\verb+l+'' means ``Lie algebra'', and $p$ and $n$ are given
separated by underscores (\verb+_+). Replacing ``\verb+l+'' by
``\verb+g+'' would compile a group quotient program. If the presentation above was saved in file \verb+twotorsion+, we could then invoke
\begin{verbatim}
% ./nq_l_2_64 -N4 -W9 ./twotorsion | grep omega
#      omega |--> 256*a149 + 4*a150 + 252*a152 + 1*a156 + 2*a157 + 256*a159 + 1*a161
#     omega2 |-->
\end{verbatim}
This tells us that, in the maximal quotient of the given Lie algebra
of nilpotency class $4$ and maximal degree $9$, the element $\omega$
is non-trivial but its double is trivial. Note that the nilpotency
class option ``\verb+-N4+'' forces all five-fold iterated commutators
to vanish, and also serves as a speedup; since the generators
\verb+y1,y2,y3+ have degree $2,3,4$ respectively, the effective
nilpotency class of the quotient is at most $1+2+3+4=10$, if each
\verb+yi+ is written as an ($i+1$)-fold iterated commutator of
degree-$1$ generators.

We have in this manner verified that $\omega$ does not belong to
$\gamma_{10}$, but that $2\omega$ does belong to $\gamma_{10}$. It
remains to check, by hand, that $\omega$ belongs to $\delta_{10}$ to
conclude that indeed the example above has $2$-torsion in
$\delta_{10}/\gamma_{10}$.

Note that the same verification could have been made by computing with
coefficients $\Z/2^{15}$; but the answer with coefficients $\Z/2^{14}$
would have been inconclusive.

The Lie algebra example~\eqref{eq:3lie} was also checked using
\texttt{anq}, as follows:
\begin{verbatim}
% make nq_l_3_38
% echo '< e0, e1, e2, e3, omega, omega3; y0, y1, y2, y3; y00, y01, y02, y03, y12, y13, y23 |
  3^0*e0 = y0, 3^2*e1 = y1, 3^4*e2 = y2, 3^6*e3 = y3,
  3^12*e1 + 3^11*e0 = 3^10*y01,
  3^12*e2 + 3^10*e0 = 3^8*y02,
  3^11*e3 + 3^9*e1 = 3^4*y13,
  3^10*e3 + 3^9*e2 = 3^2*y23,
  omega := 3^9*[e2,e1,e0], omega3 := 3*omega >' | ./nq_l_3_38 -W6 -N3 | grep omega
#      omega |--> 2*a323
#     omega3 |-->
\end{verbatim}

\begin{bibdiv}
\begin{biblist}
\bib{ANQ}{manual}{
     title={ANQ --- all nilpotent quotients, version 5.0.0},
     author={Laurent Bartholdi},
      date={2021},
       url={\texttt{http://www.github.com/laurentbartholdi/anq}},
}

\bib{Bartholdi-Passi:2015}{article}{
   author={Bartholdi, Laurent},
   author={Passi, Inder Bir S.},
   title={Lie dimension subrings},
   journal={Internat. J. Algebra Comput.},
   volume={25},
   date={2015},
   number={8},
   pages={1301--1325},
   issn={0218-1967},
   review={\MR{3438163}},
   doi={10.1142/S0218196715500423},
}

\bib{Bott-Samelson:1953}{article}{
   author={Bott, Raoul H.},
   author={Samelson, Hans},
   title={On the Pontryagin product in spaces of paths},
   journal={Comment. Math. Helv.},
   volume={27},
   date={1953},
   pages={320--337 (1954)},
   issn={0010-2571},
   review={\MR{60233}},
   doi={10.1007/BF02564566},
}

\bib{Bridson-Haefliger:1999}{book}{
    author={Bridson, Martin~R.},
    author={Haefliger, Andr\'e},
     title={Metric spaces of non-positive curvature},
 publisher={Springer-Verlag},
   address={Berlin},
      date={1999},
      ISBN={3-540-64324-9},
    review={\MR{2000k:53038}},
}
  
\bib{Bokut-Kukin:1994}{book}{
   author={Bokut\cprime , Leonid A.},
   author={Kukin, Georgii P.},
   title={Algorithmic and combinatorial algebra},
   series={Mathematics and its Applications},
   volume={255},
   publisher={Kluwer Academic Publishers Group, Dordrecht},
   date={1994},
   pages={xvi+384},
   isbn={0-7923-2313-0},
   review={\MR{1292459}},
   doi={10.1007/978-94-011-2002-9},
}

\bib{BM}{article}{
   author={Breen, Lawrence},
   author={Mikhailov, Roman},
   title={Derived functors of nonadditive functors and homotopy theory},
   journal={Algebr. Geom. Topol.},
   volume={11},
   date={2011},
   number={1},
   pages={327--415},
   issn={1472-2747},
   review={\MR{2764044}},
   doi={10.2140/agt.2011.11.327},
}

\bib{6A}{article}{
   author={Bousfield, Aldridge K.},
   author={Curtis, Edward B.},
   author={Kan, Dani\"el M.},
   author={Quillen, Daniel G.},
   author={Rector, David L.},
   author={Schlesinger, James W.},
   title={The ${\rm mod-}p$ lower central series and the Adams spectral
   sequence},
   journal={Topology},
   volume={5},
   date={1966},
   pages={331--342},
   issn={0040-9383},
   review={\MR{0199862}},
   doi={10.1016/0040-9383(66)90024-3},
}

\bib{Cartier:1958}{article}{
   author={Cartier, Pierre},
   title={Remarques sur le th\'eor\`eme de Birkhoff-Witt},
   language={French},
   journal={Ann. Scuola Norm. Sup. Pisa (3)},
   volume={12},
   date={1958},
   pages={1--4},
   review={\MR{0098121 (20 \#4583)}},
}

\bib{Cohen:1993}{article}{
   author={Cohen, Frederick R.},
   title={On combinatorial group theory in homotopy},
   conference={
      title={Homotopy theory and its applications},
      address={Cocoyoc},
      date={1993},
   },
   book={
      series={Contemp. Math.},
      volume={188},
      publisher={Amer. Math. Soc., Providence, RI},
   },
   date={1995},
   pages={57--63},
   review={\MR{1349129}},
   doi={10.1090/conm/188/02233},
}

\bib{CohenWu:2001}{article}{
   author={Cohen, Frederick R.},
   author={Wu, Jie},
   title={On braid groups, free groups, and the loop space of the 2-sphere},
   conference={
      title={Categorical decomposition techniques in algebraic topology},
      address={Isle of Skye},
      date={2001},
   },
   book={
      series={Progr. Math.},
      volume={215},
      publisher={Birkh\"{a}user, Basel},
   },
   date={2004},
   pages={93--105},
   review={\MR{2039761}},
}

\bib{CohenWu:2011}{article}{
   author={Cohen, Frederick R.},
   author={Wu, Jie},
   title={Artin's braid groups, free groups, and the loop space of the
   2-sphere},
   journal={Q. J. Math.},
   volume={62},
   date={2011},
   number={4},
   pages={891--921},
   issn={0033-5606},
   review={\MR{2853222}},
   doi={10.1093/qmath/haq010},
}

\bib{Cohn:1952}{article}{
   author={Cohn, Paul M.},
   title={Generalization of a theorem of Magnus},
   journal={Proc. London Math. Soc. (3)},
   volume={2},
   date={1952},
   pages={297--310},
   issn={0024-6115},
   review={\MR{0051834}},
   doi={10.1112/plms/s3-2.1.297},
}

\bib{Curtis:1965}{article}{
   author={Curtis, Edward B.},
   title={Some relations between homotopy and homology},
   journal={Ann. of Math. (2)},
   volume={82},
   date={1965},
   pages={386--413},
   issn={0003-486X},
   review={\MR{184231}},
   doi={10.2307/1970703},
 }
 
\bib{DP}{article}{
   author={Dold, Albrecht},
   author={Puppe, Dieter},
   title={Homologie nicht-additiver Funktoren. Anwendungen},
   language={German, with French summary},
   journal={Ann. Inst. Fourier Grenoble},
   volume={11},
   date={1961},
   pages={201--312},
   issn={0373-0956},
   review={\MR{0150183}},
}

\bib{Ellis-Mikhailov:2010}{article}{
   author={Ellis, Graham},
   author={Mikhailov, Roman},
   title={A colimit of classifying spaces},
   journal={Adv. Math.},
   volume={223},
   date={2010},
   number={6},
   pages={2097--2113},
   issn={0001-8708},
   review={\MR{2601009}},
   doi={10.1016/j.aim.2009.11.003},
}

\bib{Fox:fdc1}{article}{
   author={Fox, Ralph~H.},
   title={Free differential calculus. I. Derivation in the free group ring},
   journal={Ann. of Math. (2)},
   volume={57},
   date={1953},
   pages={547\ndash 560},
   issn={0003-486X},
   review={\MR{0053938 (14,843d)}},
}

\bib{Fox:fdc4}{article}{
   author={Chen, Kuo Tsai},
   author={Fox, Ralph~H.},
   author={Lyndon, Roger~C.},
   title={Free differential calculus. IV. The quotient groups of the lower
   central series},
   journal={Ann. of Math. (2)},
   volume={68},
   date={1958},
   pages={81\ndash 95},
   issn={0003-486X},
   review={\MR{0102539 (21 \#1330)}},
}

\bib{GAP4}{manual}{
     title={GAP --- Groups, Algorithms, and Programming, Version 4.8.10},
     label={GAP16},
    author={The GAP~Group},
      date={2018},
       url={\texttt{http://www.gap-system.org}},
}

\bib{Geoghegan:2008}{book}{
   author={Geoghegan, Ross},
   title={Topological methods in group theory},
   series={Graduate Texts in Mathematics},
   volume={243},
   publisher={Springer, New York},
   date={2008},
   pages={xiv+473},
   isbn={978-0-387-74611-1},
   review={\MR{2365352}},
   doi={10.1007/978-0-387-74614-2},
}

\bib{Gray:1969}{article}{
   author={Gray, Brayton I.},
   title={On the sphere of origin of infinite families in the homotopy
   groups of spheres},
   journal={Topology},
   volume={8},
   date={1969},
   pages={219--232},
   issn={0040-9383},
   review={\MR{245008}},
   doi={10.1016/0040-9383(69)90012-3},
}
 
\bib{Gromov:1991}{article}{
   author={Gromov, Mikhail},
   title={Asymptotic invariants of infinite groups},
   conference={
      title={Geometric group theory, Vol. 2},
      address={Sussex},
      date={1991},
   },
   book={
      series={London Math. Soc. Lecture Note Ser.},
      volume={182},
      publisher={Cambridge Univ. Press, Cambridge},
   },
   date={1993},
   pages={1--295},
   review={\MR{1253544}},
}
 
\bib{Gupta-Kuzmin:1992}{article}{
   author={Gupta, Narain},
   author={Kuz\cprime min, Yuri},
   title={On varietal quotients defined by ideals generated by Fox
   derivatives},
   journal={J. Pure Appl. Algebra},
   volume={78},
   date={1992},
   number={2},
   pages={165--172},
   issn={0022-4049},
   review={\MR{1161340}},
   doi={10.1016/0022-4049(92)90094-V},
}

 \bib{Gupta:1987}{book}{
   author={Gupta, Narain D.},
   title={Free group rings},
   series={Contemporary Mathematics},
   volume={66},
   publisher={American Mathematical Society, Providence, RI},
   date={1987},
   pages={xii+129},
   isbn={0-8218-5072-5},
   review={\MR{895359}},
   doi={10.1090/conm/066},
}

\bib{Gupta:1988}{article}{
   author={Gupta, Narain D.},
   title={Dimension subgroups of metabelian $p$-groups},
   journal={J. Pure Appl. Algebra},
   volume={51},
   date={1988},
   number={3},
   pages={241--249},
   issn={0022-4049},
   review={\MR{946575}},
   doi={10.1016/0022-4049(88)90063-1},
}

\bib{Gupta:1991}{article}{
   author={Gupta, Narain D.},
   title={A solution of the dimension subgroup problem},
   journal={J. Algebra},
   volume={138},
   date={1991},
   number={2},
   pages={479--490},
   issn={0021-8693},
   review={\MR{1102820}},
   doi={10.1016/0021-8693(91)90182-8},
}

\bib{Gupta:1996}{article}{
   author={Gupta, Narain D.},
   title={Lectures on dimension subgroups},
   note={Group rings and related topics (Portuguese) (S\~ao Paulo, 1995)},
   journal={Resenhas},
   volume={2},
   date={1996},
   number={3},
   pages={263--273},
   issn={0104-3854},
   review={\MR{1410298}},
}
 
\bib{Gupta:2002}{article}{
   author={Gupta, Narain D.},
   title={The dimension subgroup conjecture holds for odd order groups},
   journal={J. Group Theory},
   volume={5},
   date={2002},
   number={4},
   pages={481\ndash 491},
   issn={1433-5883},
   review={\MR{1931371 (2003m:20019)}},
}

\bib{Hilton:1955}{article}{
   author={Hilton, Peter J.},
   title={On the homotopy groups of the union of spheres},
   journal={J. London Math. Soc.},
   volume={30},
   date={1955},
   pages={154--172},
   issn={0024-6107},
   review={\MR{68218}},
   doi={10.1112/jlms/s1-30.2.154},
}
		
\bib{HuangWu:2020}{article}{
   author={Huang, Ruizhi},
   author={Wu, Jie},
   title={Combinatorics of double loop suspensions, evaluation maps and
   Cohen groups},
   journal={J. Math. Soc. Japan},
   volume={72},
   date={2020},
   number={3},
   pages={847--889},
   issn={0025-5645},
   review={\MR{4125848}},
   doi={10.2969/jmsj/81678167},
}

\bib{jennings:gpringnilp}{article}{
    author={Jennings, Stephen~A.},
     title={The group ring of a class of infinite nilpotent groups},
      date={1955},
   journal={Canad. J. Math.},
    volume={7},
     pages={169\ndash 187},
}

\bib{lazard:nilp}{article}{
    author={Lazard, Michel},
     title={Sur les groupes nilpotents et les anneaux de Lie},
      date={1953},
   journal={Ann. \'Ecole Norm. Sup. (3)},
    volume={71},
     pages={101\ndash 190},
}

\bib{Leibowitz:1972}{thesis}{
  author={Leibowitz, D.},
  title={The $E^1$ term of the lower central series spectral sequence for the homotopy of spaces},
  date={1972},
  place={Brandeis University},
  type={Ph.D. thesis},
}

\bib{LiWu:2011}{article}{
   author={Li, J. Y.},
   author={Wu, Jie},
   title={On symmetric commutator subgroups, braids, links and homotopy
   groups},
   journal={Trans. Amer. Math. Soc.},
   volume={363},
   date={2011},
   number={7},
   pages={3829--3852},
   issn={0002-9947},
   review={\MR{2775829}},
   doi={10.1090/S0002-9947-2011-05339-0},
}

\bib{LieRing}{manual}{
     title={LieRing --- a GAP package, version 2.3},
     author={de Graaf, Willem},
     author={Cical\`o, Serena},
      date={2016},
       url={\texttt{http://www.science.unitn.it/~degraaf/liering.html}},
}

\bib{MR1470727}{article}{
   author={Lin, Xiao-Song},
   title={Power series expansions and invariants of links},
   conference={
      title={Geometric topology},
      address={Athens, GA},
      date={1993},
   },
   book={
      series={AMS/IP Stud. Adv. Math.},
      volume={2},
      publisher={Amer. Math. Soc., Providence, RI},
   },
   date={1997},
   pages={184--202},
   review={\MR{1470727}},
}

 \bib{Losey:1960}{article}{
   author={Losey, Gerald},
   title={On dimension subgroups},
   journal={Trans. Amer. Math. Soc.},
   volume={97},
   date={1960},
   pages={474--486},
   issn={0002-9947},
   review={\MR{0148754}},
   doi={10.2307/1993383},
}

\bib{Lyndon-Schupp:1970}{book}{
    author={Lyndon, Roger~C.},
    author={Schupp, Paul~E.},
     title={Combinatorial group theory},
 publisher={Springer-Verlag},
      date={1970},
      ISBN={ISBN 3-540-07642-5},
}
    
\bib{Magnus:1935}{article}{
   author={Magnus, Wilhelm},
   title={Beziehungen zwischen Gruppen und Idealen in einem speziellen Ring},
   language={German},
   journal={Math. Ann.},
   volume={111},
   date={1935},
   number={1},
   pages={259--280},
   issn={0025-5831},
   review={\MR{1512992}},
   doi={10.1007/BF01472217},
}

\bib{Magnus:1937}{article}{
   author={Magnus, Wilhelm},
   title={\"Uber Beziehungen zwischen h\"oheren Kommutatoren},
   language={German},
   journal={J. Reine Angew. Math.},
   volume={177},
   date={1937},
   pages={105--115},
   issn={0075-4102},
   review={\MR{1581549}},
   doi={10.1515/crll.1937.177.105},
}

\bib{MKS}{book}{
   author={Magnus, Wilhelm},
   author={Karrass, Abraham},
   author={Solitar, Donald},
   title={Combinatorial group theory: Presentations of groups in terms of
   generators and relations},
   publisher={Interscience Publishers [John Wiley \& Sons, Inc.], New
   York-London-Sydney},
   date={1966},
   pages={xii+444},
   review={\MR{0207802}},
}

\bib{May:1967}{book}{
   author={May, J. Peter},
   title={Simplicial objects in algebraic topology},
   series={Van Nostrand Mathematical Studies, No. 11},
   publisher={D. Van Nostrand Co., Inc., Princeton, N.J.-Toronto,
   Ont.-London},
   date={1967},
   pages={vi+161},
   review={\MR{0222892}},
}

\bib{Mikhailov-Passi:2009}{book}{
   author={Mikhailov, Roman},
   author={Passi, Inder Bir S.},
   title={Lower central and dimension series of groups},
   series={Lecture Notes in Mathematics},
   volume={1952},
   publisher={Springer-Verlag, Berlin},
   date={2009},
   pages={xxii+346},
   isbn={978-3-540-85817-1},
   review={\MR{2460089}},
}

\bib{MPW}{article}{
   author={Mikhailov, Roman},
   author={Passi, Inder Bir S.},
   author={Wu, Jie},
   title={Symmetric ideals in group rings and simplicial homotopy},
   journal={J. Pure Appl. Algebra},
   volume={215},
   date={2011},
   number={5},
   pages={1085--1092},
   issn={0022-4049},
   review={\MR{2747240}},
   doi={10.1016/j.jpaa.2010.07.013},
}

\bib{Mikhailov-Wu:2013}{article}{
   author={Mikhailov, Roman},
   author={Wu, Jie},
   title={Combinatorial group theory and the homotopy groups of finite
   complexes},
   journal={Geom. Topol.},
   volume={17},
   date={2013},
   number={1},
   pages={235--272},
   issn={1465-3060},
   review={\MR{3035327}},
   doi={10.2140/gt.2013.17.235},
}

\bib{Mikhailov:2018}{article}{
  author={Mikhailov, Roman},
  title={Homotopy theory of Lie functors},
  eprint={arXiv:1808.00681},
  date={2018},
}

\bib{PQ}{article}{
   author={O'Brien, Eamonn A.},
   title={The $p$-group generation algorithm},
   note={Computational group theory, Part 1},
   journal={J. Symbolic Comput.},
   volume={9},
   date={1990},
   number={5-6},
   pages={677--698},
   issn={0747-7171},
   review={\MR{1075431}},
   doi={10.1016/S0747-7171(08)80082-X},
}

\bib{Passi:1968}{article}{
   author={Passi, Inder Bir S.},
   title={Dimension subgroups},
   journal={J. Algebra},
   volume={9},
   date={1968},
   pages={152--182},
   issn={0021-8693},
   review={\MR{0231916}},
   doi={10.1016/0021-8693(68)90018-5},
}

\bib{Passi:1979}{book}{
   author={Passi, Inder Bir S.},
   title={Group rings and their augmentation ideals},
   series={Lecture Notes in Mathematics},
   volume={715},
   publisher={Springer, Berlin},
   date={1979},
   pages={vi+137},
   isbn={3-540-09254-4},
   review={\MR{537126}},
}

\bib{Prufer:1923}{article}{
   author={Pr\"{u}fer, Heinz},
   title={Untersuchungen \"{u}ber die Zerlegbarkeit der abz\"{a}hlbaren prim\"{a}ren
   Abelschen Gruppen},
   language={German},
   journal={Math. Z.},
   volume={17},
   date={1923},
   number={1},
   pages={35--61},
   issn={0025-5874},
   review={\MR{1544601}},
   doi={10.1007/BF01504333},
}

\bib{Rips:1972}{article}{
   author={Rips, Eliyahu},
   title={On the fourth integer dimension subgroup},
   journal={Israel J. Math.},
   volume={12},
   date={1972},
   pages={342\ndash 346},
   issn={0021-2172},
   review={\MR{0314988 (47 \#3537)}},
}

\bib{Schlesinger}{article}{
   author={Schlesinger, James W.},
   title={The semi-simplicial free Lie ring},
   journal={Trans. Amer. Math. Soc.},
   volume={122},
   date={1966},
   pages={436--442},
   issn={0002-9947},
   review={\MR{0199861}},
   doi={10.2307/1994559},
}

\bib{Schneider:1997}{article}{
   author={Schneider, Csaba},
   title={Computing nilpotent quotients in finitely presented Lie rings},
   journal={Discrete Math. Theor. Comput. Sci.},
   volume={1},
   date={1997},
   number={1},
   pages={1--16},
   issn={1365-8050},
   review={\MR{1471347}},
}

\bib{Serre}{article}{
   author={Serre, Jean-Pierre},
   title={Homologie singuli\`ere des espaces fibr\'es. Applications},
   language={French},
   journal={Ann. of Math. (2)},
   volume={54},
   date={1951},
   pages={425--505},
   issn={0003-486X},
   review={\MR{0045386}},
   doi={10.2307/1969485},
}

\bib{Sicking:2020}{thesis}{
  author={Sicking, Thomas},
  title={Dimension subrings of Lie rings},
  date={2020},
  place={G\"ottingen University},
  type={Ph.D. thesis},
}

\bib{Sjogren:1979}{article}{
   author={Sjogren, Jon A.},
   title={Dimension and lower central subgroups},
   journal={J. Pure Appl. Algebra},
   volume={14},
   date={1979},
   number={2},
   pages={175--194},
   issn={0022-4049},
   review={\MR{524186}},
   doi={10.1016/0022-4049(79)90006-9},
}

\bib{Stallings:1975}{article}{
   author={Stallings, John R.},
   title={Quotients of the powers of the augmentation ideal in a group ring},
   conference={
      title={Knots, groups, and $3$-manifolds (Papers dedicated to the
      memory of R. H. Fox)},
   },
   book={
      publisher={Princeton Univ. Press, Princeton, N.J.},
   },
   date={1975},
   pages={101--118. Ann. of Math. Studies, No. 84},
   review={\MR{0379685}},
}

\bib{Tahara:1977a}{article}{
   author={Tahara, Ken-Ichi},
   title={The fourth dimension subgroups and polynomial maps},
   journal={J. Algebra},
   volume={45},
   date={1977},
   number={1},
   pages={102--131},
   issn={0021-8693},
   review={\MR{0432763}},
   doi={10.1016/0021-8693(77)90365-9},
}

\bib{Tahara:1981}{article}{
   author={Tahara, Ken-Ichi},
   title={The augmentation quotients of group rings and the fifth dimension
   subgroups},
   journal={J. Algebra},
   volume={71},
   date={1981},
   number={1},
   pages={141--173},
   issn={0021-8693},
   review={\MR{627430}},
   doi={10.1016/0021-8693(81)90111-3},
 }
 
\bib{Witt:1937}{article}{
   author={Witt, Ernst},
   title={Treue Darstellung Liescher Ringe},
   language={German},
   journal={J. Reine Angew. Math.},
   volume={177},
   date={1937},
   pages={152--160},
   issn={0075-4102},
   review={\MR{1581553}},
   doi={10.1515/crll.1937.177.152},
}

\bib{Wu:2001}{article}{
   author={Wu, Jie},
   title={Combinatorial descriptions of homotopy groups of certain spaces},
   journal={Math. Proc. Cambridge Philos. Soc.},
   volume={130},
   date={2001},
   number={3},
   pages={489--513},
   issn={0305-0041},
   review={\MR{1816806}},
   doi={10.1017/S030500410100487X},
}

\bib{Wu:2010}{article}{
   author={Wu, Jie},
   title={Simplicial objects and homotopy groups},
   conference={
      title={Braids},
   },
   book={
      series={Lect. Notes Ser. Inst. Math. Sci. Natl. Univ. Singap.},
      volume={19},
      publisher={World Sci. Publ., Hackensack, NJ},
   },
   date={2010},
   pages={31--181},
   review={\MR{2605306}},
   doi={10.1142/9789814291415\_0002},
}		
\end{biblist}
\end{bibdiv}

\end{document}